\documentclass[reqno,10pt]{amsart}

\usepackage{amsfonts,amsmath,amssymb,amsthm,amscd,enumerate,comment}
\usepackage[dvipsnames]{xcolor}
\usepackage{hyperref}

\newtheorem{theorem}{Theorem}[section]
\newtheorem{lemma}{Lemma}[section]
\newtheorem{corollary}{Corollary}[section]
\newtheorem{proposition}{Proposition}[section]
\theoremstyle{remark}

\numberwithin{equation}{section}

\theoremstyle{definition}

\DeclareMathOperator{\divergence}{div}
\newcommand{\Hess}{\nabla^2}
\newcommand{\Lie}{\mathcal L}
\newcommand{\dd}{\mathrm d}

\newcommand{\jinv}{\mathrm{inv}}
\newcommand{\janti}{\mathrm{anti}}

\begin{document}

\title[K\"ahlerity of complete almost-K\"ahler gradient Ricci shrinkers]
{K\"ahlerity of complete almost-K\"ahler gradient shrinking Ricci solitons}

\author[Junming Xie]{Junming Xie}
\address{Department of Mathematics, UCSB, Santa Barbara, CA 93106}
\email{junming-xie@ucsb.edu}


\begin{abstract}
	In this paper, we prove that any complete, compact or noncompact, almost-K\"ahler gradient shrinking Ricci soliton is K\"ahler in arbitrary even dimension. Among other applications, combining our result with the classification of complete gradient shrinking K\"ahler--Ricci solitons in complex dimension two, we obtain a full classification of complete almost-K\"ahler gradient shrinking Ricci solitons in real dimension four.
\end{abstract}

\maketitle

\section{Introduction}

An {\em almost-K\"ahler structure} on $M^{2m}$ is a triple $(g,J,\omega)$, where $g$ is a Riemannian metric, $J$ is an almost complex structure compatible with $g$, that is,
\begin{equation*}
	J^2=-\operatorname{Id},
	\qquad
	g(JX,JY)=g(X,Y)
\end{equation*}
for all tangent vectors $X,Y$, and the associated {\em fundamental two-form}
\begin{equation*}
	\omega(X,Y)=g(JX,Y)
\end{equation*}
is closed. The quadruple $(M^{2m},g,J,\omega)$ is then called an {\em almost-K\"ahler manifold}. It is {\em K\"ahler} precisely when $J$ is integrable, or equivalently, when $\nabla\omega=0$.

In almost-K\"ahler geometry, a natural question is to investigate what geometric conditions can force the integrability of an almost complex structure. Goldberg \cite{Goldberg:69} conjectured that every compact almost-K\"ahler Einstein manifold is necessarily K\"ahler. Sekigawa \cite{Sekigawa:87} proved the conjecture in all dimensions under the additional assumption of nonnegative scalar curvature. In real dimension four, LeBrun \cite{LeBrun:15,LeBrun:23} weakened the Einstein hypothesis by proving that every compact almost-K\"ahler four-manifold with nonnegative scalar curvature and divergence-free self-dual Weyl curvature, $\delta W^+=0$, is K\"ahler with constant scalar curvature. However, the condition $\delta W^+=0$ alone does not force integrability; see, e.g., \cite{Kim:16,Bishop-LeBrun:20}. For surveys and related developments on almost-K\"ahler geometry, see \cite{Apostolov-Draghici:03,Oguro-Sekigawa:05}.

In this paper, we investigate the integrability of almost-K\"ahler structures on gradient shrinking Ricci solitons, which naturally generalize positive Einstein metrics. Recall that a {\em connected complete} Riemannian manifold $(M,g)$ is said to be a {\em gradient shrinking Ricci soliton}, or a {\em gradient Ricci shrinker}, if there exists a smooth function $f$ on $M$ such that the Ricci tensor $Rc$ of the metric $g$ satisfies the equation
\begin{equation}\label{eq:soliton}
	Rc+\Hess f=\lambda g,
\end{equation}
for some constant $\lambda>0$. Here, $\Hess f$ denotes the Hessian of $f$, and $f$ is called a {\em potential function} of the Ricci soliton. By Perelman \cite{Perelman:02}, all compact shrinking Ricci solitons are necessarily gradient ones. Accordingly, throughout the paper, $(M^{2m},g,J,\omega,f)$ denotes a complete almost-K\"ahler gradient shrinking Ricci soliton, with $M$ either compact or noncompact.

The concept of Ricci solitons was introduced by Hamilton \cite{Hamilton:88,Hamilton:95} to study the
formation of singularities in the Ricci flow. In particular, gradient shrinking Ricci solitons are self-similar solutions to Hamilton's Ricci flow and often arise as Type-I singularity models in the Ricci flow, as shown by Naber \cite{Naber:10} and Enders--M\"uller--Topping \cite{Enders-Muller-Topping:11}; see also Cao--Zhang \cite{Cao-Zhang:11}. Therefore, understanding the classification and geometry of gradient shrinking Ricci solitons is crucial to the analysis of Ricci flow singularities and related applications.

Through the work of many authors \cite{Hamilton:88,Ivey:93,Perelman:02,Naber:10,Ni-Wallach:08,Cao-Chen-Zhu:08}, complete gradient shrinking Ricci solitons have been fully classified in dimensions two and three. By contrast, the classification and geometry of general gradient shrinking Ricci solitons in dimensions four and higher remain largely open, and relatively few non-rigid examples are known; see \cite{Cao:06,Cao:10,Chow:23,Chow-Kotschwar-Munteanu:25} for surveys of the subject and further references. One natural approach to this broader problem is to impose additional geometric structure and study the resulting class of gradient shrinking Ricci solitons.

In this spirit, we investigate the K\"ahlerity of complete, compact or noncompact, almost-K\"ahler gradient shrinking Ricci solitons in arbitrary even dimension. Our work is motivated in part by the results of Sekigawa \cite{Sekigawa:87} and LeBrun \cite{LeBrun:15} in the setting of nonnegative scalar curvature, a condition satisfied by complete gradient shrinking Ricci solitons \cite{Chen:09}, and in part by H.-D. Cao's 2012 conjecture that every compact non-Einstein Hermitian shrinking Ricci soliton in real dimension four is K\"ahler. Our main result is the following.

\begin{theorem}\label{thm:main}
	Let $(M^{2m},g, J, \omega, f)$, $m\geq2$, be a complete almost-K\"ahler gradient shrinking Ricci soliton. Then $(M^{2m},g,J,\omega,f)$ is K\"ahler.
\end{theorem}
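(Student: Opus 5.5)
We follow the strategy of Sekigawa's integral-formula proof of the Goldberg conjecture for nonnegative scalar curvature, with the Riemannian volume replaced by the weighted measure $e^{-f}dV$ so that the soliton equation \eqref{eq:soliton} replaces the Einstein condition. Two invariants measure non-integrability. The \emph{Hermitian Ricci form} is $\rho^*(X,Y)=\tfrac12\sum_i R(X,Y,e_i,Je_i)$, and the \emph{star-scalar curvature} is $s^*=2\sum_{i<j}R(e_i,Je_i,e_j,Je_j)$ (suitably normalized). For an almost-K\"ahler manifold the Weitzenb\"ock formula for the closed two-form $\omega$, combined with $|\omega|^2=m$ constant, gives the pointwise identity
\[
s^*-s=\tfrac12|\nabla\omega|^2=\tfrac12|\nabla J|^2\ \ge 0 ,
\]
so $g$ is K\"ahler if and only if $\int (s^*-s)\,e^{-f}dV=0$. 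The goal is therefore a weighted integral formula showing that $\int(s^*-s)e^{-f}$ is bounded above by a nonpositive quantity.

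\emph{Step 1: decompose the Ricci tensor.} Split $Rc=Rc^{\jinv}+Rc^{\janti}$ into $J$-invariant and $J$-anti-invariant parts, and likewise split $\Hess f$. The soliton equation then gives $Rc^{\janti}=-(\Hess f)^{\janti}$. Since $\lambda g$ is $J$-invariant, we get $Rc^{\jinv}=\lambda g-(\Hess f)^{\jinv}$. Note that $(\Hess f)^{\janti}=\tfrac12\Lie_{J\nabla f}$-type terms measure whether $J\nabla f$ is Killing, or equivalently whether $\nabla f$ is real holomorphic.

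\emph{Step 2: the Sekigawa-type integral identity.} We compute $\divergence$ of the one-form $\langle \nabla_{e_i}J, \cdots\rangle$ arising in the second Bianchi identity for almost-K\"ahler manifolds. This should yield the known pointwise formula
\[
\tfrac12\Delta(s^*-s)=\text{(quadratic terms in }\nabla J,\nabla^2 J)+\langle Rc-\rho^*\circ J\text{-terms}\rangle+\divergence(\delta Rc\text{ terms}).
\]
In Sekigawa's Einstein case this reads $\Delta(s^*-s)=|\nabla^2\text{-term}|^2+\tfrac{s}{m}\,(s^*-s)\cdot c+\dots\ge0$. Here we multiply by $e^{-f}$ and replace $\Delta$ by the drift Laplacian $\Delta_f=\Delta-\nabla f\cdot\nabla$. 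The contracted Bianchi identity $\delta Rc=-\tfrac12 ds$, together with the soliton identity $\nabla s=2Rc(\nabla f)$, is what allows the $\delta Rc$ terms to be absorbed into $\Delta_f$ and the weighted divergence. The curvature terms involve $Rc^{\jinv}$ and $Rc^{\janti}$. The $\lambda g$ part produces the good term $\lambda\,|\nabla J|^2$, playing the role of $s/m>0$. The $\Hess f$ parts should combine with the drift term $\langle\nabla f,\nabla(s^*-s)\rangle$ into a weighted divergence plus a square $|(\Hess f)^{\janti}|^2$, or $|\nabla_{\nabla f}J|^2$-type terms, with a favorable sign.

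\emph{Step 3: integrate.} Integrating against $e^{-f}$ makes the divergences vanish, leaving $\int(\lambda|\nabla J|^2+\text{squares})e^{-f}\le 0$. Hence $\nabla J\equiv0$. In the compact case this is immediate. In the noncompact case we use cutoff functions $\phi(f/r)$, the Cao--Zhou estimate $f\sim \tfrac{\lambda}{2}... d^2/4$, polynomial volume growth, and the fact that $|\nabla J|^2=2(s^*-s)$ is controlled by curvature. We also need curvature or $|\nabla^2 J|$ bounds with at most polynomial growth, or we arrange the identity so that only $|\nabla J|^2$ and $|Rc|$ appear in the boundary terms; $\int|Rc|^2e^{-f}<\infty$ is known (Munteanu--Sesum).

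\emph{Main obstacle.} The crux is Step 2. We must show that all terms coming from $\Hess f$ (the non-Einstein part) either form a weighted divergence or have a good sign. In particular, the cross term between $Rc^{\janti}$ and the $\nabla J$-quadratic expressions must be handled. We would first try to use the identity $Rc^{\janti}=-(\Hess f)^{\janti}$ to rewrite that cross term as $\langle \nabla_{\nabla f}J,\cdot\rangle$ plus a divergence, and then complete the square. A secondary difficulty is avoiding $|\nabla^2J|$ in the noncompact cutoff argument. We would try to keep the identity at first-order level, integrating $\Delta_f(s^*-s)$ against $e^{-f}$ directly.
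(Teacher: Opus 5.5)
\textbf{Verdict: there is a genuine gap.} Your outline has the right final shape: a Sekigawa-type formula weighted by $e^{-f}$, with $\lambda|\nabla\omega|^2$ playing the role of the Einstein term. But Step~2, which you yourself call the crux, is where the whole proof lives. You give no mechanism for the hoped-for recombination of the $\Hess f$-terms with a drift term into a weighted divergence plus squares of good sign. In the paper this is not achieved inside the Sekigawa computation at all.

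\textbf{What goes wrong in Step~2.} Multiplying the Apostolov--Dr\u{a}ghici--Moroianu/Sekigawa pointwise identity by $e^{-f}$ on a general almost-K\"ahler shrinker leaves terms with neither a sign nor divergence form. First, writing $e^{-f}\delta\zeta$ as a weighted divergence produces $e^{-f}\zeta(\nabla f)=e^{-f}\langle\rho^*,\nabla_{\nabla f}\omega\rangle$. Second, the Ricci term produces a multiple of $e^{-f}\chi(\nabla f,J\nabla f)$, where $\chi=\phi-\nabla^*\nabla\omega$; this contains the second-order quantity $(\nabla^*\nabla\omega)(\nabla f,J\nabla f)$. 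The paper handles $4\langle\rho,\chi\rangle$ by writing $\rho=\lambda\omega-\frac12\dd(J\nabla f)^\flat$ and applying Stokes to $e^{-f}(J\nabla f)^\flat\wedge\chi\wedge\omega^{m-2}$ together with the Weil identities. This yields $-2\lambda|\nabla\omega|^2+(\Delta_gf-|\nabla f|^2)|\nabla\omega|^2$ plus a divergence. The second term is exactly what turns $e^{-f}\Delta_g|\nabla\omega|^2$ into $\divergence\bigl(e^{-f}(\nabla|\nabla\omega|^2+|\nabla\omega|^2\nabla f)\bigr)$, so no drift term ever needs a sign. However, in general $\Lie_{\nabla f}\omega=\frac{\Delta_gf}{m}\omega-2\rho_0+\nabla_{\nabla f}\omega$. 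So this representation of the Ricci form, on which the closedness of $\rho$ and of $\chi$ also rests, is available only once $Rc^{\janti}=0$ and $\nabla_{\nabla f}\omega=0$ are known.

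\textbf{The missing idea: a prior rigidity step.} This step comes before Sekigawa's formula is touched and uses quite different identities. The paper proves two $e^{-f/2}$-weighted integral identities for the exact form $\beta=\Lie_{\nabla f}\omega=\dd(J\nabla f)^\flat$. The first comes from the Weil identity for $\beta^2\wedge\omega^{m-2}$ and Stokes, combined with a weighted integration by parts for the trace-free Ricci tensor. The second comes from Lott's weighted Bochner formula applied to $(J\nabla f)^\flat$. Adding twice the first to the second cancels the indefinite term $Rc^{\janti}(\nabla f,\nabla f)$ and gives
$\int e^{-f/2}\bigl(6|Rc^{\janti}|^2+|\nabla_{\nabla f}\omega|^2+\frac12|\Lie_{J\nabla f}g|^2\bigr)\dd\mu_g=0$.
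Next, the Killing equation for $J\nabla f$, together with $\dd\omega=0$ and $\nabla_{\nabla f}\omega=0$, shows that $\nabla\omega$ vanishes whenever one of its arguments lies in $\operatorname{span}\{\nabla f,J\nabla f\}$. This gives $\zeta(\nabla f)=0$ and $\chi(\nabla f,J\nabla f)=0$, which is exactly what lets the weighted identity close.

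\textbf{The noncompact case.} Your proposal leaves this open, and the same facts settle it. No a priori curvature bounds are available on a general shrinker, so you cannot rely on $|\nabla J|^2$ being controlled by curvature. Instead, use cutoffs $\vartheta(f/\ell)$, whose gradients are parallel to $\nabla f$. Then the $\zeta$-boundary term vanishes and the $\chi$-boundary term reduces to a multiple of $|\nabla\omega|^2$. That term is absorbed using $|\nabla\omega|^4\le m|\nabla^*\nabla\omega|^2$. So neither $\nabla^2J$ nor the full curvature tensor ever has to be controlled.
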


We note that complete gradient shrinking K\"ahler--Ricci solitons in complex dimension two have been recently classified. In the compact case, it is well known that, apart from del Pezzo surfaces carrying K\"ahler--Einstein metrics, the only compact examples are the $U(2)$-invariant Cao--Koiso shrinker on $\mathbb{CP}^2\#\overline{\mathbb{CP}}^{\,2}$ \cite{Cao:96,Koiso:90} and the toric Wang--Zhu shrinker on $\mathbb{CP}^2\#2\overline{\mathbb{CP}}^{\,2}$ \cite{Wang-Zhu:04}. In the noncompact case, recent work of Conlon--Deruelle--Sun \cite{Conlon-Deruelle-Sun:24}, Cifarelli--Conlon--Deruelle \cite{Cifarelli-Conlon-Deruelle:26}, Bamler--Cifarelli--Conlon--Deruelle \cite{Bamler-Cifarelli-Conlon-Deruelle:24}, and Li--Wang \cite{Li-Wang:26} has yielded a complete classification. In addition to the Gaussian shrinker on $\mathbb C^2$ and the product shrinker on $\mathbb{CP}^1\times\mathbb C$, the remaining complete noncompact examples are the $U(2)$-invariant FIK shrinker constructed by Feldman--Ilmanen--Knopf \cite{Feldman-Ilmanen-Knopf:03} on the blow-up of $\mathbb C^2$ at the origin, and the toric BCCD shrinker constructed by Bamler--Cifarelli--Conlon--Deruelle \cite{Bamler-Cifarelli-Conlon-Deruelle:24} on the one-point blow-up of $\mathbb{CP}^1\times\mathbb C$.

Together with the preceding classification results, Theorem~\ref{thm:main} yields the following full classification of complete almost-K\"ahler gradient shrinking Ricci solitons in real dimension four.

\begin{corollary}
	Let $(M^4,g,J,\omega,f)$ be a four-dimensional complete almost-K\"ahler gradient shrinking Ricci soliton. Then $(M^4,g,J,\omega,f)$ is biholomorphic-isometric to one of the following:
	\begin{itemize}
		\item [(a)] a closed del Pezzo surface with its unique K\"ahler--Einstein or K\"ahler--Ricci shrinker metric;
		
		\item [(b)] the Gaussian shrinker $(\mathbb C^2,g_E)$;
		
		\item [(c)] the FIK shrinker on the blow-up of $\mathbb C^2$ at the origin;
		
		\item [(d)] the standard product shrinker $(\mathbb{CP}^1\times\mathbb C,g_c)$;
		
		\item [(e)] the BCCD shrinker on the one-point blow-up of $\mathbb{CP}^1\times\mathbb C$.
	\end{itemize}
\end{corollary}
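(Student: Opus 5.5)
The corollary follows by combining Theorem~\ref{thm:main} with the known classification of complete gradient shrinking K\"ahler--Ricci solitons in complex dimension two. The plan is to check that the hypotheses of those classification results are met, and then to read off the list.

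\textbf{Step 1: the soliton is a K\"ahler--Ricci shrinker.} By Theorem~\ref{thm:main} with $m=2$, the structure $(g,J,\omega)$ is K\"ahler, so $\nabla J=0$ and $(M^4,J)$ is a complex surface. Next I check that the soliton is compatible with $J$.
\begin{itemize}
\item On a K\"ahler manifold $Rc$ is $J$-invariant. Since $\Hess f=\lambda g-Rc$ by \eqref{eq:soliton}, the Hessian of $f$ is $J$-invariant as well.
\item Together with $\nabla J=0$, this gives $\Lie_{\nabla f}J=0$, so $\nabla f$ is real holomorphic.
\item The Hessian $\nabla(J\nabla f)=J\circ\Hess f$ is then skew-adjoint, so $J\nabla f$ is a Killing field.
\item In terms of forms, \eqref{eq:soliton} becomes $\rho_\omega+i\partial\bar\partial f=\lambda\omega$, where $\rho_\omega$ is the Ricci form.
\end{itemize}
Hence $(M^4,g,J,f)$ is a complete gradient shrinking K\"ahler--Ricci soliton in complex dimension two.

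\textbf{Step 2: compact case.} Here $2\pi c_1(M)=[\rho_\omega]=\lambda[\omega]>0$, so $M$ is Fano, that is, a del Pezzo surface. There are two possibilities.
\begin{itemize}
\item If $f$ is constant, $g$ is K\"ahler--Einstein. By Tian's classification, such metrics exist exactly on the del Pezzo surfaces other than $\mathbb{CP}^2\#\overline{\mathbb{CP}}^{\,2}$ and $\mathbb{CP}^2\#2\overline{\mathbb{CP}}^{\,2}$. By Bando--Mabuchi the metric is unique up to automorphism.
\item If $f$ is nonconstant, the Tian--Zhu uniqueness theorem and the vanishing of the Futaki-type invariant on the remaining del Pezzo surfaces show that the only possibilities are the Cao--Koiso shrinker and the Wang--Zhu shrinker.
\end{itemize}
Up to biholomorphic isometry, this gives item (a).

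\textbf{Step 3: noncompact case.} I invoke Conlon--Deruelle--Sun, Cifarelli--Conlon--Deruelle, Bamler--Cifarelli--Conlon--Deruelle and Li--Wang. These results classify complete noncompact shrinking K\"ahler--Ricci solitons in complex dimension two as exactly (b)--(e):
\begin{itemize}
\item the Gaussian shrinker on $\mathbb C^2$;
\item the FIK shrinker on the blow-up of $\mathbb C^2$ at the origin;
\item the product shrinker on $\mathbb{CP}^1\times\mathbb C$;
\item the BCCD shrinker on the one-point blow-up of $\mathbb{CP}^1\times\mathbb C$.
\end{itemize}
The isometry these results produce is holomorphic, because the complex structure is the one carried by the K\"ahler soliton. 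The identification is therefore biholomorphic-isometric.

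\textbf{Main obstacle.} The only delicate point is checking that our soliton satisfies the standing hypotheses of these noncompact classification theorems. Earlier works assume bounded scalar curvature or a particular asymptotic behavior, while I believe Li--Wang removes such assumptions in complex dimension two. To handle this, I would first use the standard estimates available for any complete shrinker: Chen's $R\geq0$, and Cao--Zhou's quadratic growth of $f$ together with Euclidean volume growth. If these alone do not suffice, I would rely on Li--Wang in its general form as the result that closes the gap.
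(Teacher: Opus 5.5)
Your proposal is correct and follows the paper's route. The paper obtains this corollary by applying Theorem~\ref{thm:main} with $m=2$ and then quoting the known classification of complete gradient shrinking K\"ahler--Ricci solitons in complex dimension two: the well-known compact list, and in the noncompact case the results of Conlon--Deruelle--Sun, Cifarelli--Conlon--Deruelle, Bamler--Cifarelli--Conlon--Deruelle and Li--Wang. Your extra checks (that $\nabla f$ is real holomorphic and $J\nabla f$ is Killing) and your Fano/Tian--Zhu uniqueness reasoning in the compact case simply make explicit what the paper treats as standard.
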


More generally, Theorem~\ref{thm:main} shows that the almost-K\"ahler category contains no new complete gradient shrinking Ricci solitons beyond the K\"ahler ones. This reduction leads to several immediate consequences. For example, the uniqueness results of Tian--Zhu \cite{Tian-Zhu:00,Tian-Zhu:02} yield the following.

\begin{corollary}
	Let $(M^{2m},g,J,\omega,f)$, $m\geq2$, be a compact almost-K\"ahler shrinking Ricci soliton. Then $g$ is unique, up to automorphisms, among all normalized almost-K\"ahler shrinking Ricci soliton metrics compatible with $J$.
\end{corollary}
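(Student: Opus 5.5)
The corollary will follow from Theorem~\ref{thm:main} together with the Tian--Zhu uniqueness theorem. The plan is to reduce the almost-K\"ahler uniqueness question to uniqueness of K\"ahler--Ricci solitons on a fixed compact complex manifold.

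First, let $g$ be a compact almost-K\"ahler shrinking Ricci soliton compatible with $J$. By Perelman \cite{Perelman:02} it is a gradient soliton, so Theorem~\ref{thm:main} applies. Hence $\nabla\omega=0$, $J$ is integrable, and $(M,J)$ is a compact complex manifold. Next I fix the normalization $\lambda=1$, so that $Rc+\Hess f=g$. Because $g$ is K\"ahler, the Ricci form $\rho$ satisfies $\rho+i\partial\bar\partial f=\omega$. Thus $[\omega]=2\pi c_1(M,J)>0$, and $(M,J)$ is Fano. Since $\Hess f$ is $J$-invariant, the field $JX$ with $X=\nabla f$ is a Killing field, and $X$ is real holomorphic. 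So $(g,X)$ is a K\"ahler--Ricci soliton on the Fano manifold $(M,J)$ in the class $2\pi c_1$.

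Now let $g'$ be any other normalized almost-K\"ahler shrinking soliton metric compatible with the same $J$, with potential $f'$. The same argument shows that $g'$ is K\"ahler with respect to $J$, that $[\omega']=2\pi c_1(M,J)=[\omega]$, and that $X'=\nabla' f'$ is holomorphic. Both metrics are therefore K\"ahler--Ricci solitons on the same Fano manifold. Tian--Zhu \cite{Tian-Zhu:00} proves that the soliton vector field is unique up to conjugation in $\mathrm{Aut}^0(M,J)$; this uses the Futaki-type invariant and the reductive part of the automorphism group. So after pulling back by an automorphism we may assume $X'=X$. Then \cite{Tian-Zhu:02} shows that K\"ahler--Ricci solitons with a fixed vector field $X$ are unique modulo $\mathrm{Aut}^0(M,J)$. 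Hence $g'=\sigma^*g$ for some holomorphic automorphism $\sigma$, which is the claim.

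The main point needing care is the normalization. The phrase ``normalized'' should mean $\lambda=1$, which forces the K\"ahler class to be $2\pi c_1$. Also, ``up to automorphisms'' should be read as holomorphic automorphisms of $(M,J)$, specifically those in $\mathrm{Aut}^0(M,J)$. Finally, the soliton vector fields must be holomorphic, which holds once K\"ahlerity is known, as shown above. Beyond checking these points, the argument is a direct combination of Theorem~\ref{thm:main} with \cite{Tian-Zhu:00,Tian-Zhu:02}.
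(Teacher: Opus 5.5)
Your proposal is correct and follows the same route as the paper. Perelman's result makes the soliton gradient. Theorem~\ref{thm:main} then makes $J$ integrable, so $(M,J)$ is Fano with $[\omega]=2\pi c_1$ and $\nabla f$ is real holomorphic, and the Tian--Zhu uniqueness theorems finish the argument.

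One small correction: the two citations appear to be reversed. Uniqueness for a fixed soliton vector field should be the Acta paper \cite{Tian-Zhu:00}, while conjugacy of the soliton vector field, via the modified Futaki invariant, should come from \cite{Tian-Zhu:02}.
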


Next, combining Theorem~\ref{thm:main} with results of Munteanu--Wang \cite[Theorem~0.1]{Munteanu-Wang:15}, Sun--Zhang \cite[Proposition~3.10]{Sun-Zhang:24}, and Esparza \cite[Corollary~1.3]{Esparza:25} leads to the following topological consequences.

\begin{corollary}
	Let $(M^{2m},g,J,\omega,f)$, $m\geq2$, be a complete almost-K\"ahler gradient shrinking Ricci soliton. Then $M$ is simply connected. Moreover, if $M$ is noncompact, then it has exactly one end.
\end{corollary}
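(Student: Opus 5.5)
The corollary should follow by reducing to the Kähler case with Theorem~\ref{thm:main} and then quoting the cited results for complete gradient shrinking Kähler--Ricci solitons. No new estimate is needed; the work lies in checking that each cited result applies to the Kähler shrinker produced by Theorem~\ref{thm:main}.

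First, Theorem~\ref{thm:main} shows that $(M^{2m},g,J,\omega,f)$ is Kähler. So $J$ is integrable, $\nabla\omega=0$, and $(M,g,J,f)$ is a complete gradient shrinking Kähler--Ricci soliton with $Rc+\Hess f=\lambda g$. Since $Rc$ and $g$ are both $J$-invariant, so is $\Hess f$. Equivalently, the $(0,2)$-part of $\Hess f$ vanishes and $\nabla f$ is real holomorphic, which is the standard Kähler soliton setting assumed in the cited works.

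For simple connectivity, I would split into two cases.
\begin{itemize}
\item If $M$ is compact, then $\rho+\sqrt{-1}\partial\bar\partial f=\lambda\omega$, where $\rho$ is the Ricci form. Hence $c_1(M)$ is represented by a positive form, so $M$ is Fano. A Fano manifold is simply connected, for instance by Kobayashi's theorem via Kodaira vanishing and Riemann--Roch; this is also what Sun--Zhang \cite[Proposition~3.10]{Sun-Zhang:24} records for Kähler shrinkers.
\item If $M$ is noncompact, I would invoke Esparza \cite[Corollary~1.3]{Esparza:25}, which I expect to state that a complete gradient shrinking Kähler--Ricci soliton is simply connected. The alternative route is Sun--Zhang \cite[Proposition~3.10]{Sun-Zhang:24}. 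The idea in either case is that the universal cover is again a complete Kähler shrinker with finite weighted volume, and a holomorphic-Morse/Bochner argument for $f$ then forces the deck group to be trivial.
\end{itemize}

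For the ends, if $M$ is noncompact, Munteanu--Wang \cite[Theorem~0.1]{Munteanu-Wang:15} shows that a complete gradient shrinking Kähler--Ricci soliton has exactly one end. This applies directly once Kählerity is known.

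The main point to check is that the hypotheses of the cited results match our setting: completeness, gradient, a Kähler metric compatible with $J$, and no extra curvature assumptions. I expect them to match, since all three results are stated for general complete Kähler shrinkers. If one of them requires a normalization such as $\lambda=\tfrac12$ or $R+|\nabla f|^2=f$, I would first rescale $g$ and adjust $f$ by a constant, which changes neither the topology nor Kählerity.
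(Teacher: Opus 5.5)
Your proposal is correct and is essentially the paper's argument. Theorem~\ref{thm:main} reduces everything to a complete gradient shrinking K\"ahler--Ricci soliton, and the paper then simply invokes Munteanu--Wang \cite[Theorem~0.1]{Munteanu-Wang:15} for one end and Sun--Zhang \cite[Proposition~3.10]{Sun-Zhang:24} and Esparza \cite[Corollary~1.3]{Esparza:25} for simple connectivity, as you do (your classical Fano/Kobayashi argument for the compact case is a harmless supplement).
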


Finally, Theorem~\ref{thm:main}, together with the algebraicity theorem of Sun--Zhang \cite[Theorem~1.1]{Sun-Zhang:24}, implies the following.

\begin{corollary}
	A complete almost-K\"ahler gradient shrinking Ricci soliton is naturally a quasi-projective variety.
\end{corollary}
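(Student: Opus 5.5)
The plan is to reduce the corollary to the K\"ahler case and then cite the algebraicity result of Sun--Zhang. Let $(M^{2m},g,J,\omega,f)$ be a complete almost-K\"ahler gradient shrinking Ricci soliton.

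First, I apply Theorem~\ref{thm:main}. It gives that $J$ is integrable and $\nabla\omega=0$, so $(M,g,J,\omega)$ is a complete K\"ahler manifold. The soliton equation \eqref{eq:soliton} is unchanged by this, so $(M,g,J,f)$ is a complete gradient shrinking K\"ahler--Ricci soliton in the usual sense.

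Second, I check the compatibility of $\nabla f$ with $J$, which is part of the standard definition of a gradient K\"ahler--Ricci shrinker. On a K\"ahler manifold the Ricci tensor is $J$-invariant. Then $\Hess f=\lambda g-Rc$ is also $J$-invariant, because $g$ is. This invariance is exactly the statement that $J\nabla f$ is a Killing field, or equivalently that $\nabla f$ is real holomorphic. So the structure satisfies the hypotheses used in Sun--Zhang \cite{Sun-Zhang:24}. If those hypotheses include a normalization of $\lambda$, I rescale $g$ and $f$ to match it; this changes neither the complex structure nor quasi-projectivity.

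Third, I invoke \cite[Theorem~1.1]{Sun-Zhang:24}. It states that a complete gradient shrinking K\"ahler--Ricci soliton is biholomorphic to a quasi-projective variety, and it needs no curvature or compactness assumptions. In the compact case this reduces to the classical fact that a compact K\"ahler manifold with a K\"ahler--Ricci soliton is Fano, hence projective. The corollary follows, with ``naturally'' meaning that the quasi-projective structure is the one determined by the now-integrable complex structure $J$.

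I expect no real obstacle, since all the difficulty sits in Theorem~\ref{thm:main}. The only point needing care is the second step: confirming that the hypotheses of Sun--Zhang are met exactly, including the holomorphicity of $\nabla f$, which follows from the $J$-invariance of $\Hess f$.
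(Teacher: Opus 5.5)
Your proposal is correct and follows the paper's route. Theorem~\ref{thm:main} upgrades the structure to a complete gradient shrinking K\"ahler--Ricci soliton, and \cite[Theorem~1.1]{Sun-Zhang:24} then gives the quasi-projective structure; your extra check that $\nabla f$ is real holomorphic is harmless, and it is already recorded in the paper as $\Lie_Wg=0$ and $\Lie_VJ=0$ in Proposition~\ref{prop:first-rigidity} and Lemma~\ref{lem:soliton-symmetries}.
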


Many further consequences follow from Theorem~\ref{thm:main} and existing results on shrinking K\"ahler--Ricci solitons; see, for example, \cite{Cao-Sesum:07,Chen-Zhu:12,Munteanu-Wang:14,Li-Ni:20,Dervan-Szekelyhidi:20,Guo-Phong-Song-Sturm:22,He-Ou:24,He-Ou:25,Tran:25,Li:25,Esparza:25b,Cifarelli-Esparza:25,Li-Zhang:26,Xu-Zhang:26,Zhang:26}.

\vspace{0.05in}
\noindent\textbf{Organization of the paper and proof outline.} 
Section~\ref{sec:pre} introduces the notation and conventions used throughout the paper and collects several useful facts needed for the main arguments. 
Section~\ref{sec:coercive} establishes two $e^{-f/2}$-weighted integral identities associated with the exact two-form $\Lie_{\nabla f}\omega=\dd (J\nabla f)^\flat$. Their linear combination is coercive and shows that the Ricci tensor is $J$-invariant, $\nabla_{\nabla f}\omega=0$, and $J\nabla f$ is Killing. 
Section~\ref{sec:torsion} then uses these rigidity properties to prove that the intrinsic torsion $\nabla\omega$ vanishes whenever any one of its arguments lies in $\operatorname{span}\{\nabla f,J\nabla f\}$. 
Finally, Section~\ref{sec:weighted-Sekigawa} combines this transversality with the pointwise identity of Apostolov--Dr\u{a}ghici--Moroianu \cite[Proposition~2.1]{Apostolov-Draghici-Moroianu:01}, which recovers Sekigawa's formula \cite{Sekigawa:87} upon integration in the compact case, to derive a weighted divergence identity. Integrating against suitable cutoffs shows that the weighted integral of the resulting nonnegative terms, including $2\lambda|\nabla\omega|^2$, vanishes. Since $\lambda>0$, this forces $\nabla\omega=0$, proving Theorem~\ref{thm:main}.

\vspace{0.05in}
\noindent\textbf{Acknowledgements.}
The author is grateful to Huai-Dong Cao for valuable suggestions, continued support and encouragement; to Xiaochun Rong and Guofang Wei for their continued support and encouragement; to Ovidiu Munteanu for helpful comments; and to OpenAI for providing twelve months of complimentary access to a dedicated ChatGPT workspace through the \emph{ChatGPT for Academic Researchers} program.

\vspace{0.05in}
\noindent\textbf{AI disclosure statement.}
The author acknowledges the use of ChatGPT (OpenAI) as an interactive aid in preparing this manuscript, including assistance with exposition and with exploring and checking some of the mathematical computations and arguments. All interactions with ChatGPT were initiated and guided by the author, who substantially revised any AI-generated text incorporated into the manuscript and independently verified all mathematical statements and proofs. The author takes full responsibility for the final content of the manuscript.

\section{Preliminaries}\label{sec:pre}

In this section, we fix the notation and conventions used throughout the
paper and recall several basic facts and known results used later. Unless otherwise stated, let $(M^{2m},g,J,\omega)$, $m\geq2$, be an almost-K\"ahler manifold, with $\omega(X,Y)=g(JX,Y)$.

\subsection{Notation and conventions}

We denote by $Rm$, $Rc$, and $R$ the Riemann curvature tensor, Ricci tensor, and scalar curvature of $g$, respectively. All inner products and norms are induced by $g$, and $\dd\mu_g$ denotes the Riemannian volume measure. We use $\flat$ and $\sharp$ for the musical isomorphisms, and
$\iota_X$ and $\Lie_X$ for interior multiplication and the Lie derivative with respect to $X$, respectively.

For two-forms $\alpha,\gamma$ and symmetric two-tensors $h,k$, we use the
inner products
\begin{equation}\label{eq:metric-convention}
	\langle\alpha,\gamma\rangle
	:=\frac12\sum_{i,j=1}^{2m}
	\alpha(e_i,e_j)\gamma(e_i,e_j),
	\qquad
	\langle h,k\rangle
	:=\sum_{i,j=1}^{2m}h(e_i,e_j)k(e_i,e_j),
\end{equation}
where $\{e_i\}_{i=1}^{2m}$ is any local orthonormal frame. For any
two-form $\alpha$, we set
\begin{equation*}
	\Lambda\alpha:=\langle\alpha,\omega\rangle.
\end{equation*}

We write $\nabla^{\ast}$ and $\delta$ for the formal $L^2$-adjoints of the Levi-Civita connection $\nabla$ and the exterior derivative $\dd$, respectively. Our conventions for the
Laplace--Beltrami, rough, and Hodge Laplacians are
\begin{equation*}
	\Delta_g:=\operatorname{tr}_g\nabla^2,
	\qquad
	\nabla^{\ast}\nabla:=-\operatorname{tr}_g\nabla^2,
	\qquad
	\Delta_H:=\dd\delta+\delta\dd.
\end{equation*}
Thus $\nabla^\ast\nabla$ and $\Delta_H$ are nonnegative in the $L^2$ sense, and $\Delta_Hu=-\Delta_gu$ for any smooth function $u$. For a vector field $X$ and a symmetric two-tensor $h$, we set
\begin{equation*}
	\divergence\! X
	:=\sum_{i=1}^{2m}g(\nabla_{e_i}X,e_i),
	\qquad
	(\divergence\! h)(Y)
	:=\sum_{i=1}^{2m}(\nabla_{e_i}h)(e_i,Y).
\end{equation*}

We adopt the curvature convention $R_{X,Y}:=\nabla_{[X,Y]}-[\nabla_X,\nabla_Y]$. The corresponding curvature operator $\mathcal R:\Lambda^2T^{\ast}M\longrightarrow\Lambda^2T^{\ast}M$ is characterized by
\begin{equation*}
	\left\langle
	\mathcal R(P^{\flat}\wedge Q^{\flat}),
	X^{\flat}\wedge Y^{\flat}
	\right\rangle
	=g(R_{P,Q}X,Y)
	=Rm(P,Q,X,Y),
\end{equation*}
for all tangent vectors $P,Q,X,Y$. The \emph{star-Ricci form} is then defined by
\begin{equation}\label{eq:rho*}
	\rho^{\ast}:=\mathcal R(\omega).
\end{equation}

\subsection{Almost-K\"ahler linear algebra}

We recall the basic almost-K\"ahler linear algebra used below; see \cite[\S~1.2]{Huybrechts:05} and \cite[\S~3]{Apostolov-Draghici:03} for more details.

An orthonormal frame $\{e_1,\ldots,e_{2m}\}$ satisfying $e_{2i}=Je_{2i-1}$, $1\leq i\leq m$, is called a {\em $J$-adapted frame}. We orient $M$ by declaring such frames positively oriented. If $\{e^1,\ldots,e^{2m}\}$ denotes the dual coframe, $\omega=e^1\wedge e^2+\cdots+e^{2m-1}\wedge e^{2m}$. Thus,
\begin{equation}\label{eq:volume-normalization}
	\dd\mu_g=\frac{\omega^m}{m!},
	\qquad
	|\omega|^2=m.
\end{equation}

Let $\ast$ denote the {\em Hodge star} determined by $g$ and the chosen orientation, characterized by
\begin{equation}\label{eq:hodge*}
	\alpha\wedge \ast \gamma
	=
	\langle\alpha,\gamma\rangle\dd\mu_g,
	\qquad
	\alpha,\gamma\in\Lambda^kT^*M.
\end{equation}
In particular,
\begin{equation}\label{eq:*omega}
	\ast\omega=\frac{\omega^{m-1}}{(m-1)!}.
\end{equation}
Since $\dd\omega=0$, it follows that $\delta\omega=-\ast\dd\ast\omega=0$. Thus $\omega$ is harmonic.

For two-forms, define the {\em $J$-invariant} and {\em $J$-anti-invariant} subspaces by
\begin{equation*}
	\Lambda_J^\pm:=\bigl\{ \alpha\in\Lambda^2: \alpha(JX,JY)=\pm\alpha(X,Y) \bigr\}.
\end{equation*}
A two-form $\alpha$ is called {\em primitive} if $\Lambda\alpha=0$; see, for example, \cite[Definition~1.2.29]{Huybrechts:05}. Accordingly, the primitive part of $\Lambda_J^+$ is
\begin{equation*}
	\Lambda_{J,0}^+
	:=
	\bigl\{
	\alpha\in\Lambda_J^+:
	\langle\alpha,\omega\rangle=0
	\bigr\}.
\end{equation*}
Then, we have the orthogonal decomposition of the two-forms
\begin{equation}\label{eq:J-form-decomposition}
	\Lambda^2
	=\mathbb R\omega\oplus\Lambda_{J,0}^+\oplus\Lambda_J^-;
\end{equation}
see, e.g., \cite[\S~3.1, equation~(5)]{Apostolov-Draghici:03}. In particular, every form in $\Lambda_J^-$ is primitive.

For a symmetric two-tensor $h$, let $h^\sharp$ be the self-adjoint endomorphism characterized by
\begin{equation*}
	g(h^\sharp X,Y)=h(X,Y).
\end{equation*}
Define the {\em $J$-invariant} and {\em $J$-anti-invariant} parts of $h$ by
\begin{equation*}
	\begin{aligned}
		h^{\jinv}(X,Y)
		&:=\frac12\bigl(h(X,Y)+h(JX,JY)\bigr),\\
		h^{\janti}(X,Y)
		&:=\frac12\bigl(h(X,Y)-h(JX,JY)\bigr).
	\end{aligned}
\end{equation*}
The decomposition $h=h^{\jinv}+h^{\janti}$ is orthogonal; the corresponding endomorphisms commute and anticommute with $J$, respectively, and $\operatorname{tr}_g h^{\jinv}=\operatorname{tr}_g h,\ \operatorname{tr}_g h^{\janti}=0$.

\begin{lemma}\label{lem:algebra}
	Let $E$ be the trace-free Ricci tensor and $E^{\jinv}$ its $J$-invariant part:
	\begin{equation}\label{eq:trace-free_Ricc}
		E:=Rc-\frac{R}{2m}g,
		\qquad
		E^{\jinv}:=Rc^{\jinv}-\frac{R}{2m}g.
	\end{equation}
	Define $\rho_0$ by $\rho_0(X,Y):=E^{\jinv}(JX,Y)$. Then we have $\rho_0\in\Lambda_{J,0}^+$ and
	\begin{equation*}
		|E|^2=2|\rho_0|^2+|Rc^{\janti}|^2.
	\end{equation*}
\end{lemma}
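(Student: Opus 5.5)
The argument is pure linear algebra at a point, carried out in a $J$-adapted orthonormal frame $\{e_i\}$.

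\textbf{Step 1: $\rho_0$ is a two-form in $\Lambda_{J,0}^+$.} Since $E^{\jinv}$ is symmetric and $J$-invariant, its endomorphism $(E^{\jinv})^\sharp$ commutes with $J$. Using $J^\ast=-J$, we compute
\begin{equation*}
\rho_0(Y,X)=E^{\jinv}(JY,X)=E^{\jinv}(X,JY)=-E^{\jinv}(JX,Y)=-\rho_0(X,Y).
\end{equation*}
Here the middle step uses $g((E^{\jinv})^\sharp X,JY)=-g(J(E^{\jinv})^\sharp X,Y)=-g((E^{\jinv})^\sharp JX,Y)$. So $\rho_0$ is a two-form. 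It is $J$-invariant because
\begin{equation*}
\rho_0(JX,JY)=E^{\jinv}(J^2X,JY)=E^{\jinv}(JX,J^2Y)=\rho_0(X,Y),
\end{equation*}
where we used the $J$-invariance of $E^{\jinv}$ once. For primitivity, with our convention for $\langle\cdot,\cdot\rangle$ on two-forms,
\begin{equation*}
\Lambda\rho_0=\tfrac12\sum_{i,j}E^{\jinv}(Je_i,e_j)\,g(Je_i,e_j)=\tfrac12\sum_i E^{\jinv}(Je_i,Je_i)=\tfrac12\operatorname{tr}_g E^{\jinv}=0.
\end{equation*}
This vanishes because $\operatorname{tr}_g Rc^{\jinv}=R$, so $E^{\jinv}$ is trace-free.

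\textbf{Step 2: decompose $E$.} Since $g$ is $J$-invariant, $E^{\janti}=Rc^{\janti}$, and therefore $E=E^{\jinv}+Rc^{\janti}$. This splitting is orthogonal, as stated before the lemma; one can also see it directly by substituting $X\to JX$, $Y\to JY$ in the sum $\sum_{i,j} h(e_i,e_j)k(e_i,e_j)$, which flips the sign of the pairing of an invariant with an anti-invariant tensor. Hence
\begin{equation*}
|E|^2=|E^{\jinv}|^2+|Rc^{\janti}|^2.
\end{equation*}

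\textbf{Step 3: compare norms.} Because $J$ is an isometry, $\{Je_i\}$ is again an orthonormal frame. Thus
\begin{equation*}
2|\rho_0|^2=\sum_{i,j}E^{\jinv}(Je_i,e_j)^2=\sum_{i,j}E^{\jinv}(e_i,e_j)^2=|E^{\jinv}|^2.
\end{equation*}
The factor $2$ comes exactly from the factor $\tfrac12$ built into the two-form inner product.

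There is no real obstacle here. The only point requiring care is keeping track of sign and normalization conventions, namely $J^\ast=-J$ and the $\tfrac12$ in the two-form norm.
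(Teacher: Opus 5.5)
Your proof is correct and follows the paper's argument step for step. Skew-symmetry and $J$-invariance of $\rho_0$ come from the symmetry and $J$-invariance of $E^{\jinv}$, primitivity from $\Lambda\rho_0=\frac12\operatorname{tr}_gE^{\jinv}=0$, the identity $2|\rho_0|^2=|E^{\jinv}|^2$ from orthonormality of $\{Je_i\}$, and the conclusion from the orthogonal splitting $E=E^{\jinv}+Rc^{\janti}$. One small slip: in your $J$-invariance display the middle term should be $E^{\jinv}(JX,Y)$, obtained by applying $J$-invariance once to $E^{\jinv}(J^2X,JY)$, because as written $E^{\jinv}(JX,J^2Y)=-\rho_0(X,Y)$.
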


\begin{proof}
	First, by the symmetry and the $J$-invariance of $E^{\jinv}$, we have
	\begin{equation*}
		\rho_0(Y,X)
		=E^{\jinv}(JY,X)
		=-E^{\jinv}(JX,Y)
		=-\rho_0(X,Y),
	\end{equation*}
	and
	\begin{equation*}
		\rho_0(JX,JY)
		=E^{\jinv}(-X,JY)
		=E^{\jinv}(JX,Y)
		=\rho_0(X,Y).
	\end{equation*}
	Thus $\rho_0$ is a $J$-invariant two-form, called the {\em trace-free Ricci form}. 
	
	In a $J$-adapted frame, \eqref{eq:metric-convention} gives
	\begin{equation*}
		\langle\rho_0,\omega\rangle
		=\sum_{i=1}^m
		E^{\jinv}(e_{2i},e_{2i})
		=\frac12\operatorname{tr}_gE^{\jinv}
		=0,
	\end{equation*}
	where we used the $J$-invariance and trace-freeness of
	$E^{\jinv}$. Hence $\rho_0\in\Lambda_{J,0}^+$.
	
	Finally, by \eqref{eq:metric-convention} and the orthonormality of $\{Je_i\}_{i=1}^{2m}$,
	\begin{equation*}
		|\rho_0|^2
		=
		\frac12\sum_{i,j=1}^{2m}
		E^{\jinv}(Je_i,e_j)^2
		=
		\frac12|E^{\jinv}|^2.
	\end{equation*}
	Combining this with the orthogonal decomposition $E=E^{\jinv}+Rc^{\janti}$, we obtain
	\begin{equation*}
		|E|^2
		=
		|E^{\jinv}|^2+|Rc^{\janti}|^2
		=
		2|\rho_0|^2+|Rc^{\janti}|^2.
	\end{equation*}
	This finishes the proof of Lemma~\ref{lem:algebra}.
\end{proof}

We shall also need the following consequence of the classical Weil identities \cite{Weil:58}.

\begin{lemma}\label{lem:Weil-two-forms}
	If $a\in\mathbb R$, $\alpha\in\Lambda_{J,0}^+$, and $\gamma\in\Lambda_J^-$, then
	\begin{equation}\label{eq:Weil-quadratic-form}
		\frac{(a\omega+\alpha+\gamma)^2\wedge\omega^{m-2}}{(m-2)!}
		=\bigl(m(m-1)a^2-|\alpha|^2+|\gamma|^2\bigr)\dd\mu_g.
	\end{equation}
\end{lemma}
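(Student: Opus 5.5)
The identity is pointwise and linear-algebraic, so I will fix a point, choose a $J$-adapted coframe $\{e^1,\ldots,e^{2m}\}$ with $\omega=\sum_{i=1}^m e^{2i-1}\wedge e^{2i}$, and expand both sides as quadratic forms in $(a,\alpha,\gamma)$. Write $\beta:=a\omega+\alpha+\gamma$ and $Q(\beta):=\beta^2\wedge\omega^{m-2}/(m-2)!$. I will prove that $Q(\beta)=\bigl(m(m-1)a^2-|\alpha|^2+|\gamma|^2\bigr)\dd\mu_g$ in three steps.

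\textbf{Step 1: the pure $\omega$ term.} Here $Q(\omega)=\omega^m/(m-2)!=m(m-1)\,\omega^m/m!=m(m-1)\dd\mu_g$, using \eqref{eq:volume-normalization}.

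\textbf{Step 2: the cross terms with $\omega$.} For any primitive two-form $\eta$, I claim $\eta\wedge\omega^{m-1}=(m-1)!\,\eta\wedge\ast\omega=(m-1)!\langle\eta,\omega\rangle\dd\mu_g=0$. This follows from \eqref{eq:*omega} and \eqref{eq:hodge*}. Both $\alpha$ and $\gamma$ are primitive by the decomposition \eqref{eq:J-form-decomposition}, so all cross terms $2a\,\omega\wedge(\alpha+\gamma)\wedge\omega^{m-2}$ vanish.

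\textbf{Step 3: the remaining quadratic form $q(\eta,\eta'):=\eta\wedge\eta'\wedge\omega^{m-2}/(m-2)!$ on primitive forms.} I will verify the Weil-type identity $\ast\eta=\mp\,\eta\wedge\omega^{m-2}/(m-2)!$ for $\eta\in\Lambda^+_{J,0}$ (upper sign) and $\eta\in\Lambda^-_J$ (lower sign). Given this, \eqref{eq:hodge*} yields $q(\alpha,\alpha)=-|\alpha|^2\dd\mu_g$ and $q(\gamma,\gamma)=|\gamma|^2\dd\mu_g$. The mixed term is $q(\alpha,\gamma)=-\langle\gamma,\alpha\rangle\dd\mu_g=0$, because $\Lambda^+_{J,0}\perp\Lambda^-_J$.

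To check the Weil identity concretely, I will use a basis. A basis of $\Lambda^-_J$ is given by $e^{2i-1}\wedge e^{2j-1}-e^{2i}\wedge e^{2j}$ and $e^{2i-1}\wedge e^{2j}+e^{2i}\wedge e^{2j-1}$ with $i<j$. A basis of $\Lambda^+_{J,0}$ is given by the analogous forms with the opposite relative sign, for $i<j$, together with the diagonal forms $\sum_i c_i\,e^{2i-1}\wedge e^{2i}$ with $\sum_i c_i=0$. Since $q$ is bilinear and symmetric, it is enough to compute $q$ on pairs of these basis elements.

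\textbf{Off-diagonal basis elements.} Take an element supported on the planes $i,j$. Its square contains only the volume form of planes $i,j$, and wedging with $\omega^{m-2}/(m-2)!$ completes this to $\dd\mu_g$ with coefficient $1$. For example, $(e^{13}-e^{24})^2=-2e^{13}\wedge e^{24}=2e^{1234}$. With the norm convention \eqref{eq:metric-convention}, this form has $|\cdot|^2=2$, which gives sign $+$ on $\Lambda^-_J$. The $J$-invariant counterpart $e^{13}+e^{24}$ gives $-2$, hence sign $-$. Distinct basis elements yield $q=0$ by support or degree considerations.

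\textbf{Diagonal elements.} For $\eta=\sum_i c_i\omega_i$ with $\omega_i:=e^{2i-1}\wedge e^{2i}$, we get $q(\eta,\eta)=2\sum_{i<j}c_ic_j\dd\mu_g=\bigl((\sum_i c_i)^2-\sum_i c_i^2\bigr)\dd\mu_g=-|\eta|^2\dd\mu_g$ when $\sum_i c_i=0$. Cross terms between diagonal and off-diagonal elements vanish by degree counting in each plane.

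The main obstacle is the bookkeeping of signs and normalizations in Step 3, namely the factor $\tfrac12$ in \eqref{eq:metric-convention} and the orientation. I would settle these by the explicit $m=2$ computation above, and then note that the general case only adds the factor $\omega^{m-2}/(m-2)!$, which supplies exactly the complementary planes with coefficient $1$.
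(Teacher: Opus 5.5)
Your proposal is correct and follows the same route as the paper. You expand the square, kill the cross terms with $\omega$ by primitivity via $\ast\omega=\omega^{m-1}/(m-1)!$, and evaluate the primitive part through the Weil identities $\ast\alpha=-\alpha\wedge\omega^{m-2}/(m-2)!$ and $\ast\gamma=\gamma\wedge\omega^{m-2}/(m-2)!$; the paper simply cites these from Weil and Huybrechts, while you check them by hand in a $J$-adapted basis, and your signs and normalizations there are right.

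One small imprecision: for basis pairs on the same two planes, such as $e^1\wedge e^3-e^2\wedge e^4$ and $e^1\wedge e^3+e^2\wedge e^4$, $q$ vanishes by cancellation rather than by support or degree. Indeed, $e^1\wedge e^3\wedge e^2\wedge e^4=e^2\wedge e^4\wedge e^1\wedge e^3$, so the two surviving terms cancel; this does not affect the argument.
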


\begin{proof}
	Since $\alpha\in\Lambda_{J,0}^+$ and $\gamma\in\Lambda_J^-$ are primitive, the degree-two Weil identity (see \cite[equation~(1.16)]{Weil:58} or \cite[Proposition~1.2.31 \& Example 1.2.32]{Huybrechts:05}) yields
	\begin{equation}\label{eq:Weil-star}
		\ast \alpha=-\frac{\alpha\wedge\omega^{m-2}}{(m-2)!},
		\qquad
		\ast \gamma=\frac{\gamma\wedge\omega^{m-2}}{(m-2)!}.
	\end{equation}
	Consequently, by \eqref{eq:hodge*},
	\begin{equation*}
		\frac{\alpha^2\wedge\omega^{m-2}}{(m-2)!}
		=-|\alpha|^2\dd\mu_g,\qquad
		\frac{\gamma^2\wedge\omega^{m-2}}{(m-2)!}
		=|\gamma|^2\dd\mu_g.
	\end{equation*}
	Since $\alpha$ and $\gamma$ are primitive and orthogonal, \eqref{eq:hodge*}, \eqref{eq:*omega}, and \eqref{eq:Weil-star} imply
	\begin{equation*}
		\alpha\wedge\omega^{m-1}
		=\gamma\wedge\omega^{m-1}=0,
		\qquad
		\alpha\wedge\gamma\wedge\omega^{m-2}=0.
	\end{equation*}
	Finally, \eqref{eq:volume-normalization} yields
	\begin{equation*}
		\frac{\omega^m}{(m-2)!}
		=m(m-1)\dd\mu_g.
	\end{equation*}
	These identities yield the result upon expanding the left-hand side of \eqref{eq:Weil-quadratic-form}.
\end{proof}

\subsection{Some basic facts about gradient shrinking Ricci solitons}

We recall the following basic identities satisfied by gradient shrinking Ricci solitons.

\begin{lemma} [Hamilton \cite{Hamilton:95}] \label{lem:Hamilton}
	Let $(M^n,g,f)$ be an $n$-dimensional complete gradient shrinking Ricci soliton satisfying Eq. \eqref{eq:soliton}. Then
	\begin{itemize}
		\item[(i)] $ R + \Delta_g f = n\lambda, $

		\item[(ii)] $ \dd R = 2Rc(\nabla f,\mathord\cdot),$

		\item[(iii)] $ R + |\nabla f|^2 = 2\lambda f+C_0,$
	\end{itemize}
	where $C_0$ is some constant.
\end{lemma}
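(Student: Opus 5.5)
The three identities are local consequences of the soliton equation \eqref{eq:soliton}, obtained by tracing and by differentiating it once more. I will derive them in the order (i), (ii), (iii), since each step uses the previous one.

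For (i), take the trace of \eqref{eq:soliton} with respect to $g$. This gives $R+\Delta_g f=n\lambda$ immediately, since $\operatorname{tr}_g\Hess f=\Delta_g f$.

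For (ii), take the divergence of \eqref{eq:soliton}. The contracted second Bianchi identity gives $\divergence Rc=\tfrac12\dd R$, and $\divergence(\lambda g)=0$. It remains to compute $\divergence\Hess f$. Commuting covariant derivatives gives
\[
\nabla_i\nabla_i\nabla_j f=\nabla_j\nabla_i\nabla_i f+R_{jk}\nabla_k f ,
\]
that is, $\divergence\Hess f=\dd(\Delta_g f)+Rc(\nabla f,\cdot)$. The Ricci term enters with a plus sign; this is independent of the paper's sign convention for $R_{X,Y}$, because it is just the Bochner identity. Substituting $\Delta_g f=n\lambda-R$ from (i) yields
\[
\tfrac12\dd R-\dd R+Rc(\nabla f,\cdot)=0,
\]
which is $\dd R=2Rc(\nabla f,\cdot)$.

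For (iii), differentiate $R+|\nabla f|^2$. We have $\dd|\nabla f|^2=2\Hess f(\nabla f,\cdot)$, and by \eqref{eq:soliton} this equals $2\lambda\,\dd f-2Rc(\nabla f,\cdot)$. Adding $\dd R=2Rc(\nabla f,\cdot)$ from (ii) gives
\[
\dd\bigl(R+|\nabla f|^2-2\lambda f\bigr)=0 .
\]
Since $M$ is connected, $R+|\nabla f|^2-2\lambda f$ is a constant $C_0$.

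The only delicate point is the sign in the commutation formula for third derivatives, and this is fixed by the Bochner formula. Completeness is not used; connectedness is needed only to obtain a global constant in (iii).
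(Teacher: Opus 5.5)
Your proof is correct. The paper gives no argument of its own here and simply cites Hamilton; your derivation (trace for (i), divergence with the contracted Bianchi and Ricci commutation identities for (ii), then differentiating $R+|\nabla f|^2$ for (iii)) is the standard one behind that citation, and you are right that only connectedness, not completeness, is needed.
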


Throughout the paper, we normalize the soliton by rescaling $g$ and adding a constant to $f$ so that
\begin{equation}\label{eq:normalized-f}
	\lambda=\frac{1}{2},
	\qquad
	R+|\nabla f|^2=f.
\end{equation}
Nevertheless, we retain $\lambda$ in formulas for notational convenience.

Finally, we state several fundamental facts about complete gradient shrinking Ricci solitons that we shall need later. 

\begin{lemma} [Cao--Zhou \cite{Cao-Zhou:10}] \label{lem:Cao-Zhou}
	Let $(M^n,g,f)$ be an $n$-dimensional complete gradient shrinking Ricci soliton, and fix $p\in M$. Then there exist positive constants $c_1$, $c_2$ and $C$ such that, for any $x\in M$ and $r>0$,
	\begin{equation*}
		\begin{gathered}
			\frac{1}{4}(d(x,p)-c_1)_+^2 \leq f(x) \leq \frac{1}{4}(d(x,p)+c_2)^2, \\
			\mathrm{Vol}(B_p(r)) \leq Cr^n.
		\end{gathered}
	\end{equation*}
\end{lemma}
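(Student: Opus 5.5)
The plan is to follow the classical argument of Cao--Zhou, working in the normalization \eqref{eq:normalized-f}, i.e.\ $\lambda=\tfrac12$ and $R+|\nabla f|^2=f$, and using as the only extra input Chen's result \cite{Chen:09} that $R\ge 0$ on a complete gradient shrinker. The general statement then follows by rescaling $g$ and shifting $f$ by a constant; this only changes $c_1,c_2,C$.

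\emph{Upper bound for $f$.} Since $R\ge0$, the normalization gives $|\nabla f|^2\le f$, and in particular $f\ge0$. Hence $|\nabla\sqrt{f}|=|\nabla f|/(2\sqrt f)\le\tfrac12$ wherever $f>0$, so $\sqrt f$ is $\tfrac12$-Lipschitz; to avoid the zero set of $f$ I would run this argument with $\sqrt{f+\epsilon}$ and let $\epsilon\to0$. Integrating along a minimizing geodesic from $p$ to $x$ gives $\sqrt{f(x)}\le\sqrt{f(p)}+\tfrac12 d(x,p)$, which is the upper bound with $c_2=2\sqrt{f(p)}$.

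\emph{Lower bound for $f$.} Take $x$ with $r:=d(x,p)$ large and a unit-speed minimizing geodesic $\gamma:[0,r]\to M$ from $p$ to $x$. Along $\gamma$, the soliton equation gives
\[
\frac{d^2}{ds^2}f(\gamma(s))=\tfrac12-Rc(\gamma',\gamma').
\]
The key estimate is a bound on $\int_0^r Rc(\gamma',\gamma')\,ds$. For this I use the second variation of length with variation fields $\phi(s)E_i$, where $\{E_i\}$ is a parallel orthonormal frame normal to $\gamma'$ and $\phi$ is the cutoff equal to $s$ on $[0,1]$, to $1$ on $[1,r-1]$, and to $r-s$ on $[r-1,r]$. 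This gives
\[
\int_0^r\phi^2\,Rc(\gamma',\gamma')\,ds\le (n-1)\int_0^r|\phi'|^2\,ds=2(n-1).
\]
I then integrate the ODE against $\phi^2$, or more simply integrate it twice and handle the two unit end intervals using $|\nabla f|\le\sqrt f$ and the bound $f\le C$ on $B_p(1)$. This yields $\langle\nabla f,\gamma'\rangle(x)\ge \tfrac r2 - C'$. Combined with $|\nabla f|\le \sqrt f$, this gives $\sqrt{f(x)}\ge \tfrac12(r-c_1)$ whenever $r\ge c_1$, which is the lower bound. The main obstacle is this step: the end intervals must be treated carefully, since the Ricci term is controlled only after weighting by $\phi^2$, and the Hessian identity must then be converted into a lower bound on the gradient at the endpoint.

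\emph{Volume growth.} Let $D(t)=\{f<t\}$, $V(t)=\mathrm{Vol}(D(t))$ and $\chi(t)=\int_{D(t)}R$. By the two-sided bounds just proved, $D(t)$ is compact and comparable to $B_p(2\sqrt t)$. Integrating Lemma~\ref{lem:Hamilton}(i) over $D(t)$ and using $|\nabla f|^2=t-R$ on $\partial D(t)$ together with the coarea formula, I obtain
\[
\tfrac n2 V-\chi=\int_{\partial D(t)}|\nabla f|,
\qquad V'(t)=\int_{\partial D(t)}\frac{1}{|\nabla f|}.
\]
From these one derives the differential inequality $tV'-\tfrac n2V=t\chi'-\chi\ge\ldots$; more precisely, $(t^{-n/2}V)'\le$ a term controlled by $(t^{-n/2}\chi)'$ plus a nonnegative remainder, and $0\le\chi\le\tfrac n2 V$. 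Integrating in $t$ gives $V(t)\le Ct^{n/2}$. Finally, $\mathrm{Vol}(B_p(r))\le V((r+c_1)^2/4)\le Cr^n$ for large $r$, while small $r$ is trivial.
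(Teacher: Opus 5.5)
The paper gives no proof of this lemma; it quotes Cao--Zhou. Your outline follows their strategy: Chen's $R\ge0$, the $\tfrac12$-Lipschitz bound on $\sqrt f$, second variation along a minimizing geodesic, then an ODE for the volume of sublevel sets. The upper bound on $f$ is fine, but the other two steps have real errors.

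\textbf{Lower bound.} The $\phi^2$-weighted argument cannot give the pointwise claim $\langle\nabla f,\gamma'\rangle(x)\ge\frac r2-C'$. The reason is that $Rc(\gamma',\gamma')$ is uncontrolled on $[r-1,r]$, where $\phi<1$. In fact that claim would give $|\nabla f|(x)\ge\frac r2-C'$. Via $|\nabla f|^2=f-R$ and your upper bound on $f$, this yields a linear growth bound $R(x)\le C\,d(x,p)+C$, which nothing in your argument provides. Set $\dot f(s):=\frac{d}{ds}f(\gamma(s))$ and use $\phi(0)=\phi(r)=0$. What the argument actually gives is
\[
\frac r2-C\le\int_0^r\phi^2\ddot f\,ds=-2\int_0^1 s\,\dot f\,ds+2\int_{r-1}^r(r-s)\,\dot f\,ds .
\]
This is a lower bound on a weighted average of $\dot f$ over the last unit interval, not on $\dot f$ at the endpoint. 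To conclude, bound $|\dot f|\le C$ on $[0,1]$, and on $[r-1,r]$ use the Lipschitz bound
\[
\dot f(s)\le\sqrt{f(\gamma(s))}\le\sqrt{f(x)}+\tfrac12(r-s).
\]
This gives $\sqrt{f(x)}\ge\frac r2-C$ directly, with no statement about $\nabla f$ at $x$ needed.

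\textbf{Volume growth.} The key identity is miscomputed. From $\frac n2V-\chi=\int_{\partial D(t)}|\nabla f|$ and $|\nabla f|^2=t-R$ on $\partial D(t)$ you get $\int_{\partial D(t)}|\nabla f|=tV'-\chi'$. Hence $tV'-\frac n2V=\chi'-\chi$, not $t\chi'-\chi$. With your version,
\[
(t^{-n/2}V)'=(t^{-n/2}\chi)'+\bigl(\tfrac n2-1\bigr)t^{-n/2-1}\chi .
\]
The remainder is nonnegative, and $t^{-n/2}\chi\le\frac n2t^{-n/2}V$ carries no small factor, so integrating yields no bound at all. With the correct identity,
\[
(t^{-n/2}V)'=(t^{-n/2-1}\chi)'+\Bigl(\bigl(\tfrac n2+1\bigr)t^{-1}-1\Bigr)t^{-n/2-1}\chi\le(t^{-n/2-1}\chi)'\qquad(t\ge\tfrac n2+1).
\]
Integrate from a fixed $t_0$ and use $0\le\chi\le\frac n2V$. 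This gives $\bigl(1-\frac n{2t}\bigr)t^{-n/2}V(t)\le t_0^{-n/2}V(t_0)$, whence $V(t)\le Ct^{n/2}$. The extra factor $t^{-1}$ is the entire mechanism.

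\textbf{Minor points.} The inclusion $B_p(r)\subset D(\frac14(r+c_2)^2)$ uses the upper bound on $f$, so the constant is $c_2$, not $c_1$; the lower bound is what makes $D(t)$ compact. Also, the coefficient $\frac14$ is tied to $\lambda=\frac12$. So the lemma must be read under \eqref{eq:normalized-f}, not recovered for general $\lambda$ ``up to constants''.
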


\begin{lemma} [Munteanu--\v{S}e\v{s}um \cite{Munteanu-Sesum:13}] \label{lem:Munteanu-Sesum}
	Let $(M^n,g,f)$ be an $n$-dimensional complete gradient shrinking Ricci soliton. Then, for any $a>0$, we have
	\begin{equation*}
		\int_M e^{-a f} |Rc|^2 \dd\mu_g< \infty. 
	\end{equation*}
\end{lemma}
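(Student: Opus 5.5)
The plan is to integrate the trace of the soliton equation against the weight $e^{-af}$ and a cutoff, and to use the polynomial bounds of Lemma~\ref{lem:Cao-Zhou} to show that every term on the right-hand side is finite.

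\textbf{Pointwise identity.} With the normalization \eqref{eq:normalized-f}, $\lambda=\tfrac12$, tracing the contracted second Bianchi identity against \eqref{eq:soliton} gives Hamilton's identity
\begin{equation*}
\Delta_g R-\langle\nabla f,\nabla R\rangle=R-2|Rc|^2 .
\end{equation*}
This follows by differentiating Lemma~\ref{lem:Hamilton}(ii) and using $\divergence Rc=\tfrac12\dd R$ together with $\Hess f=\tfrac12 g-Rc$. Thus
\begin{equation*}
2|Rc|^2=R-\Delta_gR+\langle\nabla f,\nabla R\rangle .
\end{equation*}

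\textbf{Integration against a cutoff.} Fix $p$ and let $\phi$ be a cutoff that equals $1$ on $B_p(r)$, is supported in $B_p(2r)$, and satisfies $|\nabla\phi|\le C/r$. Multiply the identity by $\psi:=\phi^2e^{-af}$ and integrate. Integrating the Laplacian term by parts gives
\begin{equation*}
\int-\Delta_gR\,\psi=\int\langle\nabla R,\nabla\phi^2\rangle e^{-af}-a\int\langle\nabla R,\nabla f\rangle\psi .
\end{equation*}
The $\nabla f$-terms combine to $(1-a)\int\langle\nabla R,\nabla f\rangle\psi$. Integrate this once more by parts:
\begin{equation*}
\int\langle\nabla R,\nabla f\rangle\psi=-\int R\bigl(\Delta_g f\,\psi+\langle\nabla f,\nabla\phi^2\rangle e^{-af}-a|\nabla f|^2\psi\bigr).
\end{equation*}
By Lemma~\ref{lem:Hamilton}(i) and \eqref{eq:normalized-f}, $\Delta_gf=\tfrac n2-R$. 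By Chen's result \cite{Chen:09}, $R\ge0$, so $0\le R\le f$ and $|\nabla f|^2\le f$. Every integrand here is therefore bounded by $C(1+f^2)e^{-af}$, up to bounded factors coming from $\phi$.

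\textbf{Absorbing the gradient term.} The only term containing curvature to first order is $\int\langle\nabla R,\nabla\phi^2\rangle e^{-af}$. By Lemma~\ref{lem:Hamilton}(ii), $|\nabla R|\le2|Rc||\nabla f|$, so this term is bounded by
\begin{equation*}
\frac{4}{r}\int\phi|Rc||\nabla f|e^{-af}\le\int|Rc|^2\phi^2e^{-af}+\frac{C}{r^2}\int_{B_p(2r)}f\,e^{-af}.
\end{equation*}
The first piece is absorbed into the left-hand side $2\int|Rc|^2\phi^2e^{-af}$.

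\textbf{Conclusion.} Combining these estimates,
\begin{equation*}
\int|Rc|^2\phi^2e^{-af}\le C\int_M(1+f^2)e^{-af}\,\dd\mu_g .
\end{equation*}
The right-hand side is finite and independent of $r$, because by Lemma~\ref{lem:Cao-Zhou} $f$ grows like $d(x,p)^2/4$ while volumes grow at most polynomially. Letting $r\to\infty$ and applying monotone convergence gives the claim.

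The main point of care is the absorption step just described. The bookkeeping of signs of $a$ and $1-a$ is harmless, since those terms are bounded in absolute value.
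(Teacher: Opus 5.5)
Your argument is correct. There is no in-paper proof to compare it with: the paper does not prove this lemma but quotes it from Munteanu--\v{S}e\v{s}um.

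What you give is a valid self-contained proof along the standard lines:
\begin{itemize}
\item integrate $\Delta_g R-\langle\nabla f,\nabla R\rangle=R-2|Rc|^2$ against $\phi^2e^{-af}$;
\item absorb the only term that is first order in curvature, using $|\nabla R|\le 2|Rc||\nabla f|$;
\item bound everything else by $0\le R\le f$ and $|\nabla f|^2\le f$ (from Chen's result) together with the Cao--Zhou growth estimates.
\end{itemize}

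Two cosmetic remarks. First, the second integration by parts of $(1-a)\int\langle\nabla R,\nabla f\rangle\psi$ is unnecessary. Since $|\langle\nabla R,\nabla f\rangle|=2|Rc(\nabla f,\nabla f)|\le 2|Rc|\,f$, this term can be absorbed by Young's inequality exactly as you did the cutoff term. Second, a general distance cutoff $\phi_r$ need not increase with $r$, so monotone convergence is not automatic. Either take $\phi_r=\eta(d(p,\cdot)/r)$ with $\eta$ nonincreasing, or finish with Fatou's lemma, or note that $\int_{B_p(r)}|Rc|^2e^{-af}\le C$ uniformly in $r$.
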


\section{A coercive identity}\label{sec:coercive}

Throughout the remainder of the paper, for convenience, we set
\begin{equation*}
	V:=\nabla f,
	\qquad
	W:=JV,
	\qquad
	\beta:=\Lie_V\omega.
\end{equation*}
Since $\iota_V\omega=W^\flat$ and $\dd\omega=0$, Cartan's formula implies
\begin{equation}\label{eq:beta}
	\beta=\dd (\iota_V\omega)=\dd W^\flat.
\end{equation}
Thus $\beta$ is exact and hence closed. We first determine its components with respect to the decomposition \eqref{eq:J-form-decomposition}.

\begin{lemma}\label{lem:Lie-decomposition}
	Let $(M^{2m},g,J,\omega,f)$, $m\geq 2$, be an almost-K\"ahler gradient shrinking Ricci soliton. Then, for $V=\nabla f$ and $W=JV$, we have
	\begin{equation}\label{eq:beta-decomp}
		\beta:=\Lie_V\omega
		=\frac{\Delta_gf}{m}\omega-2\rho_0+\nabla_V\omega,
	\end{equation}
	where $\rho_0$ is the trace-free Ricci form. Moreover, 
	\begin{equation}\label{eq:na_V-omega}
		\nabla_V\omega\in\Lambda_J^-,
		\qquad
		(\nabla_V\omega)(V,W)=0,
	\end{equation}
	and
	\begin{equation}\label{eq:beta-VW}
		\beta(V,W)=\Hess f(V,V)+\Hess f(W,W).
	\end{equation}
\end{lemma}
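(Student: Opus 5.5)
We begin by writing the Lie derivative with the Levi-Civita connection. For any vector field $X$,
\begin{equation*}
	(\Lie_X\omega)(Y,Z)=(\nabla_X\omega)(Y,Z)+\omega(\nabla_YX,Z)+\omega(Y,\nabla_ZX).
\end{equation*}
Take $X=V=\nabla f$, so that $\nabla V=(\Hess f)^\sharp$ is self-adjoint. Writing $H:=\Hess f$, we get
\begin{equation*}
	\beta(Y,Z)=(\nabla_V\omega)(Y,Z)+H(Y,JZ)\cdot(-1)+H(JY,Z),
\end{equation*}
up to a careful sign check. The precise form is $\beta(Y,Z)=(\nabla_V\omega)(Y,Z)+H(JY,Z)-H(JZ,Y)$. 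Now split $H=H^{\jinv}+H^{\janti}$.

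\textbf{The $J$-anti-invariant part.} For $H^{\janti}$, the endomorphism anticommutes with $J$, so $H^{\janti}(JY,Z)=-H^{\janti}(Y,JZ)=-H^{\janti}(JZ,Y)$. The two terms then cancel, since $H^{\janti}(JY,Z)-H^{\janti}(JZ,Y)=0$.

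\textbf{The $J$-invariant part.} For $H^{\jinv}$, the two terms add to give $2H^{\jinv}(JY,Z)$. Using the soliton equation, $H^{\jinv}=\lambda g-Rc^{\jinv}$. By Lemma~\ref{lem:Hamilton}(i), $\lambda=\frac{R+\Delta_gf}{2m}$, so
\begin{equation*}
	H^{\jinv}=\frac{\Delta_gf}{2m}g-E^{\jinv}.
\end{equation*}
Therefore $2H^{\jinv}(JY,Z)=\frac{\Delta_g f}{m}\omega(Y,Z)-2\rho_0(Y,Z)$, which yields \eqref{eq:beta-decomp}. The main bookkeeping obstacle is keeping the signs in the Lie derivative formula and in $\omega(X,Y)=g(JX,Y)$ consistent. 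I would verify them on a trivial case such as $H=g$.

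\textbf{Type and the vanishing $(\nabla_V\omega)(V,W)=0$.} For \eqref{eq:na_V-omega}, we use the standard fact that $\nabla_X\omega\in\Lambda_J^-$ for any $X$ when $J$ is orthogonal. Differentiating $J^2=-\mathrm{Id}$ gives $(\nabla_XJ)J=-J(\nabla_XJ)$, and then
\begin{equation*}
	(\nabla_X\omega)(JY,JZ)=g((\nabla_XJ)JY,JZ)=-g(J(\nabla_XJ)Y,JZ)=-(\nabla_X\omega)(Y,Z).
\end{equation*}
Hence $(\nabla_V\omega)(V,JV)=-(\nabla_V\omega)(JV,J^2V)=(\nabla_V\omega)(JV,V)=-(\nabla_V\omega)(V,JV)$, so this quantity vanishes.

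\textbf{The value $\beta(V,W)$.} For \eqref{eq:beta-VW}, evaluate the raw formula at $Y=V$, $Z=W=JV$, using $(\nabla_V\omega)(V,W)=0$. This gives
\begin{equation*}
	\beta(V,W)=H(JV,JV)-H(J^2V,V)=H(W,W)+H(V,V),
\end{equation*}
exactly as claimed. Alternatively, one can use the decomposition just proved, which gives $\frac{\Delta f}{m}|V|^2-2E^{\jinv}(W,W)=2H^{\jinv}(V,V)=H(V,V)+H(W,W)$; the two routes should agree, which also serves as a sign check.
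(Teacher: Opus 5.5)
Your proof is correct and is essentially the paper's proof. Both expand $\Lie_V\omega$ via the Levi-Civita connection and split $\Hess f$ into its $J$-invariant and $J$-anti-invariant parts (your $H^{\jinv}=\frac{\Delta_gf}{2m}g-E^{\jinv}$ and $H^{\janti}=-Rc^{\janti}$ are the paper's $\Hess f=(\lambda-\frac{R}{2m})g-E^{\jinv}-Rc^{\janti}$). Both then use $(\nabla_VJ)J=-J(\nabla_VJ)$ for the type and evaluate at $(V,W)$.

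One sign slip needs fixing. Since $(H^{\janti})^\sharp$ anticommutes with $J$, the correct relation is $H^{\janti}(JY,Z)=+H^{\janti}(Y,JZ)=H^{\janti}(JZ,Y)$; the minus sign you wrote holds for the $J$-invariant part. It is this plus sign that makes $H^{\janti}(JY,Z)-H^{\janti}(JZ,Y)$ vanish; your displayed chain would instead give $2H^{\janti}(JY,Z)$.
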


\begin{proof}
	First, since $V=\nabla f$, we have $\nabla_XV=(\Hess f)^\sharp X$. Expanding the Lie derivative gives
	\begin{equation}\label{eq:Lie-omega}
		(\Lie_V\omega)(X,Y)
		=(\nabla_V\omega)(X,Y)
		+\omega\bigl((\Hess f)^\sharp X,Y\bigr)
		+\omega\bigl(X,(\Hess f)^\sharp Y\bigr).
	\end{equation}
	The soliton equation \eqref{eq:soliton} and \eqref{eq:trace-free_Ricc} imply
	\begin{equation*}
		\Hess f
		=\left(\lambda-\frac{R}{2m}\right)g
		-E^{\jinv}-Rc^{\janti}.
	\end{equation*}
	Since $(E^{\jinv})^\sharp$ commutes with $J$ and $(Rc^{\janti})^\sharp$ anticommutes with $J$, we have
	\begin{equation*}
		\begin{aligned}
			\omega\bigl((E^{\jinv})^\sharp X,Y\bigr)
			+\omega\bigl(X,(E^{\jinv})^\sharp Y\bigr)
			&=2\rho_0(X,Y),\\
			\omega\bigl((Rc^{\janti})^\sharp X,Y\bigr)
			+\omega\bigl(X,(Rc^{\janti})^\sharp Y\bigr)
			&=0.
		\end{aligned}
	\end{equation*}
	Substituting these three identities into \eqref{eq:Lie-omega} and using Lemma~\ref{lem:Hamilton}(i) proves \eqref{eq:beta-decomp}.
	
	Next, since $\omega(X,Y)=g(JX,Y)$ and $\nabla g=0$, we have
	\begin{equation}\label{eq:nablaV-omega}
		(\nabla_V\omega)(X,Y)
		=g\bigl((\nabla_VJ)X,Y\bigr).
	\end{equation}
	Differentiating $J^2=-\operatorname{Id}$ along $V$ yields
	\begin{equation}\label{eq:nablaVJ-anticommute}
		(\nabla_VJ)J=-J(\nabla_VJ).
	\end{equation}
	It follows from \eqref{eq:nablaV-omega} and \eqref{eq:nablaVJ-anticommute} that
	\begin{equation*}
		(\nabla_V\omega)(JX,JY)
		=-(\nabla_V\omega)(X,Y),
	\end{equation*}
	and hence $\nabla_V\omega\in\Lambda_J^-$. In particular, since $W=JV$,
	\begin{equation*}
		(\nabla_V\omega)(V,W)=0.
	\end{equation*}
	
	Finally, evaluating \eqref{eq:Lie-omega} on $(V,W)$ yields
	\begin{equation*}
		\beta(V,W)
		=\omega\bigl((\Hess f)^\sharp V,W\bigr)
		+\omega\bigl(V,(\Hess f)^\sharp W\bigr)
		=\Hess f(V,V)+\Hess f(W,W).
	\end{equation*}
	This proves \eqref{eq:beta-VW}, and thereby concludes the proof of Lemma~\ref{lem:Lie-decomposition}.
\end{proof}

The following two identities form the analytic core of the argument.

\begin{lemma}\label{lem:two-identities}
	Let $(M^{2m},g,J,\omega,f)$, $m\geq2$, be a complete almost-K\"ahler
	gradient shrinking Ricci soliton. Then, for $V=\nabla f$ and $W=JV$, we have
	\begin{align}
		0&=\int_M e^{-f/2}\left(
		2|Rc^{\janti}|^2+|\nabla_V\omega|^2
		+Rc^{\janti}(V,V)\right)\dd\mu_g,
		\label{eq:exact-half}\\
		0&=\int_M e^{-f/2}\left(
		2|Rc^{\janti}|^2+\frac12|\Lie_Wg|^2
		-|\nabla_V\omega|^2
		-2Rc^{\janti}(V,V)\right)\dd\mu_g.
		\label{eq:Yano-half}
	\end{align}
	Moreover, each term in the two integrands is integrable with respect to $e^{-f/2}\dd\mu_g$.
\end{lemma}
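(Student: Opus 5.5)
The plan is to derive both identities as weighted integrations by parts of divergence-type expressions built from $W^\flat$ and $\beta=\dd W^\flat$. Each is first proved against a cutoff $\phi_r=\phi(f/r)$, and then we let $r\to\infty$. The weight $e^{-f/2}$ is chosen so that $\dd(e^{-f/2})=-\tfrac12 e^{-f/2}\dd f$ generates exactly the terms involving $V$ which, via $\lambda=\tfrac12$ and Lemma~\ref{lem:Hamilton}, recombine into curvature quantities.

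\textbf{First identity.} I would use the Weil quadratic form. By Lemma~\ref{lem:Lie-decomposition}, $\beta=a\omega+\alpha+\gamma$ with
\[
a=\tfrac{\Delta_g f}{m},\qquad \alpha=-2\rho_0,\qquad \gamma=\nabla_V\omega .
\]
Lemma~\ref{lem:Weil-two-forms} then gives
\[
\frac{\beta^2\wedge\omega^{m-2}}{(m-2)!}=\Bigl(\tfrac{m-1}{m}(\Delta_gf)^2-4|\rho_0|^2+|\nabla_V\omega|^2\Bigr)\dd\mu_g .
\]
Since $\dd\omega=0$ and $\beta=\dd W^\flat$, the left side equals $\dd\bigl(W^\flat\wedge\beta\wedge\omega^{m-2}\bigr)/(m-2)!$. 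Multiplying by $e^{-f/2}\phi_r$ and applying Stokes produces a boundary-type term $\tfrac12\int e^{-f/2}\phi_r\,\dd f\wedge W^\flat\wedge\beta\wedge\omega^{m-2}$ plus cutoff errors.

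Contracting $\dd f\wedge W^\flat=V^\flat\wedge(JV)^\flat$ against $\beta\wedge\omega^{m-2}$ gives a combination of $|V|^2\Lambda\beta=|V|^2\Delta_g f$ and $\beta(V,W)$. By \eqref{eq:beta-VW}, $\beta(V,W)=\Hess f(V,V)+\Hess f(W,W)$, which equals $|V|^2-Rc(V,V)-Rc(W,W)$. I would then express $(\Delta_gf)^2$ and $|\rho_0|^2$ through $|E|^2$ using Lemma~\ref{lem:algebra}, and use the standard weighted shrinker identities. These come from integrating $\divergence(e^{-f/2}\,\cdot)$ of $R\,V$ and of $Rc(V,\cdot)$, using Lemma~\ref{lem:Hamilton}(ii) and $\divergence Rc=\tfrac12\dd R$. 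They convert $\int e^{-f/2}|Rc|^2$, $\int e^{-f/2}(\Delta f)^2$ and $\int e^{-f/2}Rc(V,V)$ into one another.

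After these substitutions the $J$-invariant pieces should cancel. What remains is $2|Rc^{\janti}|^2+|\nabla_V\omega|^2+Rc^{\janti}(V,V)$, noting that $Rc(V,V)-Rc(W,W)=2Rc^{\janti}(V,V)$. The bookkeeping of these constants is the step I expect to be most delicate. If the direct route does not close, I would first try the alternative of pairing $\beta$ weightedly with $\omega$ and with itself.

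\textbf{Second identity.} This is a weighted Yano/Bochner formula for the vector field $W$. I would integrate
\[
\divergence\bigl(e^{-f/2}\phi_r(\nabla_WW-(\divergence W)W)\bigr),
\]
which yields
\[
\int e^{-f/2}\Bigl(\tfrac12|\Lie_Wg|^2-\tfrac12|\dd W^\flat|^2-(\divergence W)^2-Rc(W,W)\Bigr)
\]
plus terms with $g(\nabla_W W,V)$ and $(\divergence W)\,g(W,V)=0$. Here:
\begin{itemize}
\item $g(V,W)=0$, so $g(\nabla_WW,V)=-\Hess f(W,W)$;
\item $\divergence W=-\delta(JV^\flat)$, which is computed from $\delta\omega=0$ and $\Delta_gf$, and should reduce to $\langle\omega,\Hess f\rangle$-type terms that vanish;
\item $|\dd W^\flat|^2=|\beta|^2=m a^2+4|\rho_0|^2+|\nabla_V\omega|^2$;
\item $Rc(W,W)=Rc^{\jinv}(V,V)-Rc^{\janti}(V,V)$.
\end{itemize}
Subtracting the same shrinker identities as in the first step should leave \eqref{eq:Yano-half}.

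\textbf{Integrability and the limit $r\to\infty$.} Lemma~\ref{lem:Cao-Zhou} gives $|V|^2\le f$, Gaussian-type decay of $e^{-f/2}$ against polynomial volume growth, and $|\nabla\phi_r|\le C/\sqrt r$ on the support. Lemma~\ref{lem:Munteanu-Sesum} makes $e^{-f/2}|Rc|^2$ integrable, and hence $e^{-f/2}|Rc|\,|V|^2$ as well. The terms $|\nabla_V\omega|^2$ and $|\Lie_Wg|^2$ are not a priori controlled. For these I would exploit signs: the first identity with cutoff expresses $\int e^{-f/2}\phi_r|\nabla_V\omega|^2$ in terms of already-integrable quantities plus cutoff errors. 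The cutoff errors are bounded by $r^{-1/2}$ times the same quantities, so an absorption argument gives a uniform bound, and then $r\to\infty$. Since $|\Lie_Wg|\le C(|\Hess f|+|\nabla_V\omega|)|V|$, the same bound controls $\tfrac12|\Lie_Wg|^2$ in the second identity. I expect this absorption step, together with the exact constant bookkeeping in the first identity, to be the main obstacle.
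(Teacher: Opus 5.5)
Your overall plan matches the paper's. The first identity is the paper's Part~I: the Weil quadratic form for $\beta$, Stokes against $e^{-f/2}$, the polarized Weil identity for $\dd f\wedge W^\flat\wedge\beta\wedge\omega^{m-2}$, and then weighted shrinker identities, which are equivalent to the paper's weighted $|E|^2$ identity. For the second identity, a weighted Yano formula for $W$ is equivalent to the paper's use of Lott's weighted Bochner formula for $W^\flat$.

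\textbf{The gap: integrability of $\Lie_Wg$.} Your bound $|\Lie_Wg|\le C(|\Hess f|+|\nabla_V\omega|)|V|$ is false. Set $B(X,Y):=g((\nabla_XJ)V,Y)$. Since $\nabla_XW=(\nabla_XJ)V+J(\Hess f)^\sharp X$, and using $\dd\omega=0$ for the second identity,
\[
(\Lie_Wg)(X,Y)=B(X,Y)+B(Y,X)+g\bigl([J,(\Hess f)^\sharp]X,Y\bigr),\qquad(\nabla_V\omega)(X,Y)=B(X,Y)-B(Y,X).
\]
So $\nabla_V\omega$ sees only the antisymmetric part of $B$. The symmetric part, where $V$ sits in a form slot of $\nabla\omega$ rather than the derivative slot, is independent of it. This is exactly why Proposition~\ref{prop:horizontal-torsion} needs the Killing equation in addition to $\nabla_V\omega=0$. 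That symmetric part is bounded only by $|\nabla\omega||V|$, whose weighted integrability is unknown at this stage.

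\textbf{The repair.} Use your localized Yano identity itself. In the paper's norm conventions its correct integrand is $\frac14|\Lie_Wg|^2-\frac12|\beta|^2+Rc(W,W)-(\divergence W)^2$. Your coefficient $\frac12$ and your sign $-Rc(W,W)$ would not reproduce \eqref{eq:Yano-half}. The weight adds the term $\frac12\Hess f(W,W)$. Since $\divergence W=0$ and $\nabla\phi_r=\frac1r\phi'(f/r)V$, the only cutoff error is
\[
e^{-f/2}g(\nabla\phi_r,\nabla_WW)=-e^{-f/2}\tfrac1r\phi'(f/r)\Hess f(W,W),
\]
which is $O\bigl((1+|Rc|)e^{-f/2}\bigr)$ on $\{r\le f\le2r\}$. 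Hence $\int e^{-f/2}\phi_r|\Lie_Wg|^2\le2\int e^{-f/2}|\beta|^2+C$, with no absorption needed. The needed fact $\beta\in L^2(e^{-f/2}\dd\mu_g)$ comes from the first identity. The paper instead applies Lott's formula to $\psi W^\flat$ and absorbs.

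\textbf{The first-identity limit also needs adjusting.} There is no $r^{-1/2}$ gain. On $\{r\le f\le2r\}$ one has $|\nabla\phi_r||W|\approx|\phi'(f/r)||V|^2/r=O(1)$, so the Stokes error carries no small factor in front of $|\beta|$. You can fix this in either of two ways.
\begin{enumerate}
\item Absorb via Young's inequality, using a cutoff satisfying $|\phi'|^2\le C\phi$.
\item As the paper does, note that $\dd\phi_r\wedge W^\flat$ is $J$-invariant. The polarized Weil identity then kills the $\Lambda_J^-$-component $\nabla_V\omega$ of $\beta$ in that term. The error integrand becomes, up to sign, $\frac1r\phi'(f/r)\bigl(|V|^2\Delta_gf-\beta(V,W)\bigr)$ with $\beta(V,W)=\Hess f(V,V)+\Hess f(W,W)$. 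This is Ricci-controlled and tends to zero.
\end{enumerate}
Then move $Rc^{\janti}(V,V)$, which is integrable by Cauchy--Schwarz, to the other side. This gives $\nabla_V\omega\in L^2(e^{-f/2}\dd\mu_g)$, hence $\beta\in L^2(e^{-f/2}\dd\mu_g)$, which is what the $\Lie_Wg$ step needs.
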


We first prove the two identities in the compact case and then extend them to the complete noncompact case using cutoffs adapted to the potential function $f$. The proof is divided into three parts.

\begin{proof}[\bf Part I: proof of \eqref{eq:exact-half} in the compact case.]
	On the one hand, by Lemma~\ref{lem:Lie-decomposition},
	\begin{equation*}
		\beta
		=\frac{\Delta_gf}{m}\omega-2\rho_0+\nabla_V\omega.
	\end{equation*}
	Since $\rho_0\in\Lambda_{J,0}^+$ and $\nabla_V\omega\in\Lambda_J^-$, Lemma~\ref{lem:Weil-two-forms} yields
	\begin{equation}\label{eq:beta-Weil}
		\frac{\beta^2\wedge\omega^{m-2}}{(m-2)!}
		=
		\left(
		\frac{m-1}{m}(\Delta_gf)^2
		-4|\rho_0|^2+|\nabla_V\omega|^2
		\right)\dd\mu_g.
	\end{equation}
	
	On the other hand, since 
	\begin{equation*}
		\dd f(JX)=-W^\flat(X),\qquad W^\flat(JX)=\dd f(X),
	\end{equation*}
	the two-form $\dd f\wedge W^\flat$ is $J$-invariant. Moreover,
	\begin{equation*}
		\Lambda(\dd f\wedge W^\flat)=|V|^2,
		\qquad
		\Lambda\beta=\Delta_gf,
		\qquad
		\langle\dd f\wedge W^\flat,\beta\rangle=\beta(V,W).
	\end{equation*}
	The polarized form of Lemma~\ref{lem:Weil-two-forms} (see also \cite[Lemma~4.7]{Szekelyhidi:14}) therefore gives
	\begin{equation*}
		\frac{\dd f\wedge W^\flat\wedge\beta
			\wedge\omega^{m-2}}{(m-2)!}
		=
		\left(
		|V|^2\Delta_gf-\beta(V,W)
		\right)\dd\mu_g.
	\end{equation*}
	Using $\beta=\dd W^\flat$, $\dd \beta=0$, and $\dd \omega=0$, we have
	\begin{equation*}
		\dd\left(W^\flat\wedge\beta\wedge\omega^{m-2}\right)=\beta^2\wedge\omega^{m-2}.
	\end{equation*}
	Thus, Stokes' theorem yields
	\begin{equation}\label{eq:half-Weil-Stokes}
		\begin{aligned}
			\int_Me^{-f/2}
			\frac{\beta^2\wedge\omega^{m-2}}{(m-2)!}
			&=-\int_M\dd(e^{-f/2})\wedge
			\frac{W^\flat\wedge\beta\wedge\omega^{m-2}}{(m-2)!}\\
			&=\frac12\int_Me^{-f/2}
			\frac{\dd f\wedge W^\flat\wedge\beta
				\wedge\omega^{m-2}}{(m-2)!}\\
			&=\frac12\int_Me^{-f/2}
			\left(
			|V|^2\Delta_gf-\beta(V,W)
			\right)\dd\mu_g.
		\end{aligned}
	\end{equation}
	
	Next, taking the trace-free part of the soliton equation \eqref{eq:soliton} and using \eqref{eq:trace-free_Ricc}, the contracted Bianchi identity, and Lemma~\ref{lem:Hamilton}(i), we obtain
	\begin{equation}\label{eq:E&divE}
		E=-(\Hess f)_0=-\left( \Hess f-\frac{\Delta_gf}{2m}g\right),
		\qquad
		\divergence\! E
		=-\frac{m-1}{2m}\,\dd(\Delta_gf).
	\end{equation}
	Since $E$ is trace-free, \eqref{eq:E&divE} and weighted integration by parts yield
	\begin{equation}\label{eq:half-trace-free-energy}
		\begin{split}
			\int_Me^{-f/2}|E|^2\dd\mu_g
			&=-\int_Me^{-f/2}\langle E,\Hess f\rangle\dd\mu_g\\
			&=\int_Me^{-f/2}(\divergence\! E)(V)\dd\mu_g
			-\frac12\int_Me^{-f/2}E(V,V)\dd\mu_g\\
			&=\frac{m-1}{2m}\int_Me^{-f/2}
			\left(
			(\Delta_gf)^2
			-\frac12(\Delta_gf)|V|^2
			\right)\dd\mu_g\\
			&\quad-\frac12\int_Me^{-f/2}E(V,V)\dd\mu_g,
		\end{split}
	\end{equation}
	where the last equality follows from $ \divergence(e^{-f/2}V)=e^{-f/2}\left(\Delta_gf-\frac12|V|^2\right)$.
	
	After substituting \eqref{eq:beta-Weil} into the left-hand side of \eqref{eq:half-Weil-Stokes}, we multiply \eqref{eq:half-trace-free-energy} by $2$ and use it to eliminate the term involving $(\Delta_gf)^2$. Then Lemma~\ref{lem:algebra} gives
	\begin{equation*}
		0=\int_Me^{-f/2}\Bigg(
		2|Rc^{\janti}|^2+|\nabla_V\omega|^2
		+\frac12\Big[
		-\frac1m(\Delta_gf)|V|^2
		+2E(V,V)+\beta(V,W)
		\Big]\Bigg)\dd\mu_g.
	\end{equation*}
	Finally, by Lemma~\ref{lem:Hamilton}(i), \eqref{eq:trace-free_Ricc}, \eqref{eq:beta-VW}, the soliton equation~\eqref{eq:soliton}, and $|W|=|V|$,
	\begin{equation}\label{eq:Rc-anti}
		\begin{split}
			&-\frac1m(\Delta_gf)|V|^2
			+2E(V,V)+\beta(V,W)\\
			&\qquad
			=-2\lambda|V|^2+2Rc(V,V)
			+\Hess f(V,V)+\Hess f(W,W)\\
			&\qquad
			=-\Hess f(V,V)+\Hess f(W,W)\\
			&\qquad
			=Rc(V,V)-Rc(JV,JV)=2Rc^{\janti}(V,V).
		\end{split}
	\end{equation}
	Substituting this identity into the preceding integral proves \eqref{eq:exact-half}.
\end{proof}

\begin{proof}[\bf Part II: proof of \eqref{eq:Yano-half} in the compact case.]
	First, as $\iota_W\omega=-\dd f$ and $\dd\omega=0$, Cartan's formula gives $\Lie_W\omega=0$. Hence,
	\begin{equation*}
		\left( \divergence\! W\right) \dd\mu_g
		=\Lie_W\left(\frac{\omega^m}{m!}\right)
		=\frac{\Lie_W\omega\wedge\omega^{m-1}}{(m-1)!}=0,
	\end{equation*}
	which implies $\divergence\! W=0$. Moreover, $Wf=\dd f(W)=g(V,W)=0$.
	Consequently,
	\begin{equation}\label{eq:weighted-delta}
		\delta W^\flat+\frac12\iota_VW^\flat=0.
	\end{equation}
	It is also easy to see that
	\begin{equation}\label{eq:weighted-Rc}
		Rc-\Hess\bigl(\log e^{-f/2}\bigr)=Rc+\frac12\Hess f.
	\end{equation}
	Now, applying Lott's weighted Bochner identity \cite[equation~(2.12)]{Lott:03} to $W^\flat$ and using \eqref{eq:weighted-Rc}, we have
	\begin{align*}
		\int_Me^{-f/2}|\nabla W^{\flat}|^2\dd\mu_g
		&=\int_Me^{-f/2}\left(
		|\dd W^\flat|^2
		+\left|\delta W^\flat+\frac12\iota_VW^\flat\right|^2
		\right)\dd\mu_g\\
		&\quad-\int_Me^{-f/2}
		\left(Rc+\frac12\Hess f\right)(W,W)\dd\mu_g\\
		&=\int_Me^{-f/2}|\beta|^2\dd\mu_g
		-\int_Me^{-f/2}
		\left(Rc+\frac12\Hess f\right)(W,W)\dd\mu_g,
	\end{align*}
	where we have used \eqref{eq:weighted-delta} and \eqref{eq:beta} in the last equality. 
	
	The symmetric and skew-symmetric parts of $\nabla W^\flat$ are $\frac12\Lie_Wg$ and $\frac12\dd W^\flat$, respectively. Their orthogonality, $\dd W^\flat=\beta$, and the norm conventions in \eqref{eq:metric-convention} yield
	\begin{equation}\label{eq:norm_na-W}
		|\beta|^2+\frac12|\Lie_Wg|^2=2|\nabla W^{\flat}|^2.
	\end{equation}
	Combining the weighted integral of \eqref{eq:norm_na-W} with the preceding identity, we obtain
	\begin{equation}\label{eq:half-Yano-raw}
		\begin{split}
			\frac12\int_Me^{-f/2}|\Lie_Wg|^2\dd\mu_g
			&=\int_Me^{-f/2}|\beta|^2\dd\mu_g\\
			&\quad-2\int_Me^{-f/2}
			\left(Rc+\frac12\Hess f\right)(W,W)\dd\mu_g.
		\end{split}
	\end{equation}
	
	Next, we compute the weighted energy of $\beta$. By
	\eqref{eq:beta-decomp}, the orthogonality of the three components of $\beta$, and Lemma~\ref{lem:algebra}, we have
	\begin{equation*}
		\int_Me^{-f/2}|\beta|^2\dd\mu_g
		=\int_Me^{-f/2}\left(
		\frac1m(\Delta_gf)^2+2|E|^2
		-2|Rc^{\janti}|^2+|\nabla_V\omega|^2
		\right)\dd\mu_g.
	\end{equation*}
	It remains to evaluate the first two terms on the right-hand side of the above identity. Using \eqref{eq:half-trace-free-energy}, \eqref{eq:trace-free_Ricc}, and Lemma~\ref{lem:Hamilton}(i), we obtain
	\begin{align*}
		&\int_Me^{-f/2}\left(
		\frac1m(\Delta_gf)^2+2|E|^2\right)\dd\mu_g\\
		&\qquad=
		\int_Me^{-f/2}\left[
		(\Delta_gf)\left(\Delta_gf-\frac12|V|^2\right)
		-Rc(V,V)+\lambda|V|^2\right]\dd\mu_g.
	\end{align*}
	Moreover, by $ \divergence(e^{-f/2}V)=e^{-f/2}\left(\Delta_gf-\frac12|V|^2\right)$, Lemma~\ref{lem:Hamilton}(i)\&(ii), and the soliton equation \eqref{eq:soliton}, weighted integration by parts gives
	\begin{align*}
		\int_Me^{-f/2}(\Delta_gf)
		\left(\Delta_gf-\frac12|V|^2\right)\dd\mu_g
		&=-\int_Me^{-f/2}
		\langle\nabla(\Delta_gf),V\rangle\dd\mu_g\\
		&=2\int_Me^{-f/2}Rc(V,V)\dd\mu_g\\
		&=2\int_Me^{-f/2}
		\left(\lambda|V|^2-\Hess f(V,V)\right)\dd\mu_g.
	\end{align*}
	Consequently, combining the preceding three identities and using the soliton equation \eqref{eq:soliton}, we obtain
	\begin{equation}\label{eq:half-beta-energy}
		\begin{split}
			&\int_Me^{-f/2}|\beta|^2\dd\mu_g\\
			&\quad=
			\int_Me^{-f/2}\left(
			2\lambda|V|^2-\Hess f(V,V)
			-2|Rc^{\janti}|^2+|\nabla_V\omega|^2
			\right)\dd\mu_g.
		\end{split}
	\end{equation}
	
	Finally, the soliton equation \eqref{eq:soliton} and $|W|=|V|$ give
	\begin{equation*}
		Rc+\frac12\Hess f
		=\lambda g-\frac12\Hess f,
		\qquad
		-\Hess f(V,V)+\Hess f(W,W)
		=2Rc^{\janti}(V,V).
	\end{equation*}
	Substituting \eqref{eq:half-beta-energy} and these two identities into \eqref{eq:half-Yano-raw}, with $|W|=|V|$, yields
	\begin{equation*}
		\frac12\int_Me^{-f/2}|\Lie_Wg|^2\dd\mu_g
		=
		\int_Me^{-f/2}\left(
		-2|Rc^{\janti}|^2+|\nabla_V\omega|^2
		+2Rc^{\janti}(V,V)\right)\dd\mu_g.
	\end{equation*}
	Rearranging proves \eqref{eq:Yano-half}.
\end{proof}

\begin{proof}[\bf Part III: proof of the complete noncompact case.]
	Assume now that $M$ is complete and noncompact. By Chen's result \cite{Chen:09}, $R\geq0$. Thus, \eqref{eq:normalized-f} gives
	\begin{equation*}
		f=R+|\nabla f|^2\geq0.
	\end{equation*} 
	For each integer $\ell\geq2$, define the cutoff function
	\begin{equation*}
		\psi_{\ell}:=\vartheta\left(\tfrac{f}{\ell}\right),
	\end{equation*}
	where $\vartheta\in C^\infty([0,\infty),[0,1])$ is nonincreasing, equals $1$ on $[0,1]$, vanishes on $[2,\infty)$, and satisfies
	\begin{equation*}
		|\vartheta'|^2\leq C\vartheta,\qquad|\vartheta''|\leq C
	\end{equation*}
	for some constant $C>0$ independent of $\ell$. By Lemma~\ref{lem:Cao-Zhou}, $f$ is proper, so $\psi_{\ell}$ has compact support in $\{f\leq2\ell\}$. Moreover, $\psi_\ell\uparrow1$ pointwise as $\ell\to\infty$, and
	\begin{equation*}
		\nabla\psi_{\ell}
		=\tfrac{1}{\ell}\vartheta'\left(\tfrac{f}{\ell}\right)V,
		\qquad
		W\psi_{\ell}
		=\tfrac{1}{\ell}\vartheta'\left(\tfrac{f}{\ell}\right)Wf
		=0.
	\end{equation*}
	Thus $\nabla\psi_{\ell}$ is parallel to $V$ and supported in $\{\ell \leq f \leq 2\ell\}$. On this transition region, $|V|^2\leq f$ and the properties of $\vartheta$ yield
	\begin{equation}\label{eq:half-cutoff-bounds}
		|V|^2\leq C\ell,
		\qquad
		\tfrac{|\nabla\psi_{\ell}|^2}{\psi_{\ell}}\leq\tfrac{C}{\ell},
		\qquad
		\psi_{\ell}|\nabla\psi_{\ell}||V|\leq C,
	\end{equation}
	where the second inequality is understood on $\{\psi_{\ell}>0\}$. In the following, we write $\psi=\psi_\ell$ for simplicity.
	
	\medskip\noindent\emph{Proof of \eqref{eq:exact-half} in the complete noncompact case.}
	We repeat the compact-case computation with the cutoff $\psi^2$ inserted. Since $W=JV$, we have
	\begin{equation*}
		\Lambda(\dd \psi\wedge W^\flat)
		=\langle\nabla\psi,V\rangle,
		\qquad
		\langle\dd\psi\wedge W^\flat,\beta\rangle
		=\beta(\nabla\psi,W).
	\end{equation*}
	Moreover, $\dd\psi\wedge W^\flat$ is $J$-invariant because $\nabla\psi$ is parallel to $V$. The polarized Weil identity and Stokes' theorem then yield
	\begin{equation*}
		\begin{split}
			&\int_M\psi^2e^{-f/2}
			\left[
			\frac{m-1}{m}(\Delta_gf)^2
			-2|E|^2+2|Rc^{\janti}|^2
			+|\nabla_V\omega|^2
			\right]\dd\mu_g\\
			&\quad=
			\frac12\int_M\psi^2e^{-f/2}
			\left[
			|V|^2\Delta_gf-\beta(V,W)
			\right]\dd\mu_g\\
			&\qquad
			+2\int_M\psi e^{-f/2}
			\left[
			\beta(\nabla\psi,W)
			-(\Delta_gf)\langle\nabla\psi,V\rangle
			\right]\dd\mu_g.
		\end{split}
	\end{equation*}
	Similarly, repeating the argument for \eqref{eq:half-trace-free-energy} with the same cutoff gives
	\begin{equation*}
		\begin{split}
			\int_M\psi^2e^{-f/2}|E|^2\dd\mu_g
			&=\frac{m-1}{2m}
			\int_M\psi^2e^{-f/2}
			\left[
			(\Delta_gf)^2-\frac12(\Delta_gf)|V|^2
			\right]\dd\mu_g\\
			&\quad-\frac12\int_M\psi^2e^{-f/2}E(V,V)\dd\mu_g
			+2\int_M\psi e^{-f/2}
			E(\nabla\psi,V)\dd\mu_g\\
			&\quad+\frac{m-1}{m}
			\int_M\psi e^{-f/2}(\Delta_gf)
			\langle\nabla\psi,V\rangle\dd\mu_g.
		\end{split}
	\end{equation*}
	Combining the above two identities with \eqref{eq:Rc-anti}, we obtain
	\begin{equation}\label{eq:localized-half-coercive}
		\begin{aligned}
			&\int_M\psi^2e^{-f/2}\left(
			2|Rc^{\janti}|^2+|\nabla_V\omega|^2
			+Rc^{\janti}(V,V)\right)\dd\mu_g\\
			&\quad=
			\int_M\psi e^{-f/2}\left(
			4E(\nabla\psi,V)
			+2\beta(\nabla\psi,W)
			-\frac2m(\Delta_gf)
			\langle\nabla\psi,V\rangle
			\right)\dd\mu_g.
		\end{aligned}
	\end{equation}
	
	Now, since $\nabla\psi$ is parallel to $V$, \eqref{eq:na_V-omega} gives $(\nabla_V\omega)(\nabla\psi,W)=0$. It follows from \eqref{eq:beta-decomp} and Lemma~\ref{lem:algebra} that
	\begin{equation*}
		|\beta(\nabla\psi,W)|
		\leq C\bigl(|\Delta_gf|+|E|\bigr)
		|\nabla\psi||V|.
	\end{equation*}
	Therefore, using \eqref{eq:half-cutoff-bounds}, the absolute value of the right-hand side of \eqref{eq:localized-half-coercive} is bounded by
	\begin{equation*}
		C\int_{\{\ell\leq f\leq2\ell\}}
		e^{-f/2}\bigl(|\Delta_gf|+|E|\bigr)\dd\mu_g.
	\end{equation*}
	By Lemma~\ref{lem:Hamilton}(i) and $|R|\leq \sqrt{2m}|Rc|$, we have $|\Delta_gf|+|E|\leq C(1+|Rc|)$. 
	Then, Lemmas~\ref{lem:Cao-Zhou} and~\ref{lem:Munteanu-Sesum} show that $|\Delta_gf|+|E|$ belongs to
	$L^1(e^{-f/2}\dd\mu_g)$. The preceding integral is therefore a weighted $L^1$ tail and tends to zero as $\ell\to\infty$.
	
	Moreover, $|V|^2\leq f$, Lemmas~\ref{lem:Cao-Zhou} and~\ref{lem:Munteanu-Sesum}, and Cauchy--Schwarz imply that $Rc^{\janti}(V,V)\in L^1(e^{-f/2}\dd\mu_g)$. Moving this term to the right-hand side of \eqref{eq:localized-half-coercive} and discarding the nonnegative term $2|Rc^{\janti}|^2$ gives
	\begin{equation*}
		\int_M\psi^2e^{-f/2}|\nabla_V\omega|^2\dd\mu_g
		\leq
		C+\int_Me^{-f/2}|Rc^{\janti}(V,V)|\dd\mu_g.
	\end{equation*}
	Fatou's lemma and $\psi_{\ell}\to1$ as $\ell \rightarrow \infty$ therefore imply $\nabla_V\omega\in L^2(e^{-f/2}\dd\mu_g)$. Hence, all terms on the left-hand side of \eqref{eq:localized-half-coercive} are now integrable.
	
	Finally, let $\ell\to\infty$ in \eqref{eq:localized-half-coercive}.
	Dominated convergence on the left-hand side and the vanishing
	of the right-hand side yield \eqref{eq:exact-half}. The decomposition \eqref{eq:beta-decomp} also gives
	\begin{equation}\label{eq:beta-weighted-L2}
		\beta\in L^2(e^{-f/2}\dd\mu_g).
	\end{equation}

	\medskip\noindent\emph{Proof of \eqref{eq:Yano-half} in the complete noncompact case.}
	We shall first recover \eqref{eq:half-beta-energy}. By Lemma~\ref{lem:Hamilton}(i)\&(ii), $\dd(\Delta_gf)=-2Rc(V,\mathord\cdot)$. Then, integration by parts with cutoff $\psi^2$ gives
	\begin{align*}
		2\int_M\psi^2e^{-f/2}Rc(V,V)\dd\mu_g
		&=\int_M\psi^2e^{-f/2}(\Delta_gf)
		\left(\Delta_gf-\frac12|V|^2\right)\dd\mu_g\\
		&\quad+2\int_M\psi e^{-f/2}(\Delta_gf)
		\langle\nabla\psi,V\rangle\dd\mu_g.
	\end{align*}
	Recall that repeating the argument for \eqref{eq:half-trace-free-energy}
	with cutoff $\psi^2$ introduces the terms
	\begin{equation*}
		\int_M\psi e^{-f/2}\left[
		2E(\nabla\psi,V)
		+\frac{m-1}{m}(\Delta_gf)
		\langle\nabla\psi,V\rangle
		\right]\dd\mu_g.
	\end{equation*}
	By \eqref{eq:half-cutoff-bounds}, all terms involving $\nabla\psi$
	in both computations are bounded in absolute value by
	\begin{equation*}
		C\int_{\{\ell\leq f\leq2\ell\}}
		e^{-f/2}\bigl(|\Delta_gf|+|E|\bigr)\dd\mu_g,
	\end{equation*}
	which tends to zero as $\ell\to\infty$. Since the remaining integrands are integrable with respect to $e^{-f/2}\dd\mu_g$, we may pass to the limit by dominated convergence. Combining the resulting identities as in the compact case yields \eqref{eq:half-beta-energy}.
	
	Next, since $W\psi=0$, the product rule for the codifferential and \eqref{eq:weighted-delta} imply
	\begin{equation*}
		\delta(\psi W^\flat)
		+\frac12\iota_V(\psi W^\flat)
		=
		\psi\left(
		\delta W^\flat+\frac12\iota_VW^\flat
		\right)-W\psi
		=0.
	\end{equation*}
	Applying Lott's weighted Bochner identity \cite[equation~(2.12)]{Lott:03} to $\psi W^\flat$ and using the same norm decomposition as in
	\eqref{eq:norm_na-W}, we obtain
	\begin{equation*}
		0=\int_M e^{-f/2}\bigg(
		\frac12|\Lie_{\psi W}g|^2
		-\left|\dd(\psi W^\flat)\right|^2
		+2\left(Rc+\frac12\Hess f\right)
		(\psi W,\psi W)
		\bigg)\,\dd\mu_g.
	\end{equation*}
	A direct expansion, using $\dd W^\flat=\beta$ and $W\psi=0$, gives
	\begin{equation*}
		\begin{split}
			&\frac12|\Lie_{\psi W}g|^2
			-\left|\dd(\psi W^\flat)\right|^2\\
			&\quad=
			\psi^2\left(
			\frac12|\Lie_Wg|^2-|\beta|^2
			\right)
			+2\psi\left(
			(\Lie_Wg)(\nabla\psi,W)
			-\beta(\nabla\psi,W)
			\right).
		\end{split}
	\end{equation*}
	Substituting this identity into the preceding integral identity, we have
	\begin{equation}\label{eq:localized-half-Yano}
		\begin{split}
			&\int_M\psi^2e^{-f/2}\left(
			\frac12|\Lie_Wg|^2-|\beta|^2
			+2\left(Rc+\frac12\Hess f\right)(W,W)
			\right)\dd\mu_g\\
			&\quad=2\int_M\psi e^{-f/2}\left(
			\beta(\nabla\psi,W)
			-(\Lie_Wg)(\nabla\psi,W)
			\right)\dd\mu_g.
		\end{split}
	\end{equation}
	
	To pass to the limit in the preceding identity, we shall first prove that $\Lie_Wg\in L^2(e^{-f/2}\dd\mu_g)$. Using $|W|=|V|$ and Young's inequality, we obtain
	\begin{equation*}
		2\psi|\nabla\psi||V|
		\bigl(|\beta|+|\Lie_Wg|\bigr)
		\leq
		\psi^2|\beta|^2
		+\frac14\psi^2|\Lie_Wg|^2
		+C|\nabla\psi|^2|V|^2.
	\end{equation*}
	Applying this estimate to the right-hand side of \eqref{eq:localized-half-Yano} and absorbing the term $\frac{1}{4}\psi^2|\Lie_Wg|^2$ into the left-hand side gives
	\begin{equation}\label{eq:L_Wg}
		\begin{split}
			\frac14\int_M\psi^2e^{-f/2}|\Lie_Wg|^2\dd\mu_g
			&\leq2\int_M\psi^2e^{-f/2}|\beta|^2\dd\mu_g\\
			&\quad+2\int_M\psi^2e^{-f/2}
			\left|\left(Rc+\frac12\Hess f\right)(W,W)\right|\dd\mu_g\\
			&\quad+C\int_Me^{-f/2}|\nabla\psi|^2|V|^2\dd\mu_g.
		\end{split}
	\end{equation}
	The soliton equation \eqref{eq:soliton} gives
	\begin{equation*}
		\left|\left(Rc+\frac12\Hess f\right)(W,W)\right| =\left|\frac12(Rc+\lambda g)(W,W)\right|
		\leq C(1+|Rc|)|V|^2,
	\end{equation*}
	which belongs to $L^1(e^{-f/2}\dd\mu_g)$ by Lemmas~\ref{lem:Cao-Zhou} and \ref{lem:Munteanu-Sesum}. Moreover, \eqref{eq:half-cutoff-bounds} yields
	\begin{equation*}
		\int_Me^{-f/2}|\nabla\psi|^2|V|^2\dd\mu_g
		\leq
		C\int_{\{\ell\leq f\leq2\ell\}}e^{-f/2}\dd\mu_g.
	\end{equation*}
	By \eqref{eq:beta-weighted-L2} and the preceding two estimates, the right-hand side of \eqref{eq:L_Wg} is bounded independently of $\ell$. Fatou's lemma therefore yields $\Lie_Wg\in L^2(e^{-f/2}\dd\mu_g)$.
	
	Now, for $T=\beta$ or $T=\Lie_Wg$, Cauchy--Schwarz and \eqref{eq:half-cutoff-bounds} imply
	\begin{equation*}
		\left|\int_M\psi e^{-f/2}
		T(\nabla\psi,W)\dd\mu_g\right|
		\leq
		C\left(
		\int_{\{\ell\leq f\leq2\ell\}}
		e^{-f/2}|T|^2\dd\mu_g
		\right)^{1/2}
		\longrightarrow0.
	\end{equation*}
	Hence the right-hand side of \eqref{eq:localized-half-Yano} vanishes as $\ell\to\infty$, while dominated convergence on the left gives \eqref{eq:half-Yano-raw}. Combining this with \eqref{eq:half-beta-energy} and proceeding as in the compact case proves \eqref{eq:Yano-half}, together with the asserted integrability properties.
	
	This completes the proof of {\bf Part~III} and thereby the proof of Lemma~\ref{lem:two-identities}.
\end{proof}

\begin{proposition}\label{prop:first-rigidity}
	Let $(M^{2m},g,J,\omega,f)$, $m\geq2$, be a complete almost-K\"ahler
	gradient shrinking Ricci soliton. Then, for $V=\nabla f$ and $W=JV$, we have
	\begin{equation}\label{eq:coercive}
		\int_Me^{-f/2}\left(
		6|Rc^{\janti}|^2+|\nabla_V\omega|^2
		+\frac12|\Lie_Wg|^2
		\right)\dd\mu_g=0.
	\end{equation}
	Consequently, for any tangent vectors $X$ and $Y$,
	\begin{equation*}
		Rc(JX,JY)=Rc(X,Y),
		\qquad
		\nabla_V\omega=0,
		\qquad
		\Lie_Wg=0.
	\end{equation*}
\end{proposition}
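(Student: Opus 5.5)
The natural route is to take the linear combination $2\times$\eqref{eq:exact-half} $+$ \eqref{eq:Yano-half} from Lemma~\ref{lem:two-identities}. The coefficient $2$ is chosen precisely so that the indefinite terms $Rc^{\janti}(V,V)$ cancel. Collecting terms:
\begin{itemize}
\item the $|Rc^{\janti}|^2$ terms give $4+2=6$;
\item the $|\nabla_V\omega|^2$ terms give $2-1=1$;
\item the $\Lie_Wg$ term contributes $\tfrac12|\Lie_Wg|^2$;
\item the terms $Rc^{\janti}(V,V)$ give $2-2=0$.
\end{itemize}
This yields exactly \eqref{eq:coercive}. Lemma~\ref{lem:two-identities} asserts that every term is integrable against $e^{-f/2}\dd\mu_g$, so the two integrals may be added and split termwise without any convergence issue. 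In the noncompact case this rests entirely on Part~III, which we take as given.

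Since the integrand in \eqref{eq:coercive} is a sum of nonnegative continuous functions and $e^{-f/2}>0$, each term must vanish identically on $M$. Hence $Rc^{\janti}\equiv0$, $\nabla_V\omega\equiv0$, and $\Lie_Wg\equiv0$.

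Finally, recall that $Rc^{\janti}(X,Y)=\tfrac12\bigl(Rc(X,Y)-Rc(JX,JY)\bigr)$. Its vanishing is therefore equivalent to $Rc(JX,JY)=Rc(X,Y)$ for all $X,Y$. The condition $\Lie_Wg=0$ says that $W=J\nabla f$ is Killing.

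There is no real obstacle here. The only points requiring care are the bookkeeping of coefficients, so that the cross terms cancel, and the appeal to the integrability statement of Lemma~\ref{lem:two-identities}, which justifies combining the integrals in the noncompact case.
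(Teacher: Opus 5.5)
Your proposal is correct and follows the same route as the paper. Adding twice \eqref{eq:exact-half} to \eqref{eq:Yano-half} cancels the $Rc^{\janti}(V,V)$ terms and gives \eqref{eq:coercive}; its nonnegative continuous integrand then vanishes pointwise, and the integrability clause of Lemma~\ref{lem:two-identities} is what justifies adding the integrals and splitting them termwise in the noncompact case.
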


\begin{proof}
	Adding twice \eqref{eq:exact-half} to \eqref{eq:Yano-half} yields \eqref{eq:coercive}, in which every term in the integrand is nonnegative; thus each vanishes identically. This proves Proposition~\ref{prop:first-rigidity}.
\end{proof}

\smallskip
\section{Torsion transversality}\label{sec:torsion}
In this section, we shall use Proposition~\ref{prop:first-rigidity} to show that the intrinsic torsion $\nabla\omega$ is transverse to $\operatorname{span}\{\nabla f, J\nabla f\}$. We then derive several additional consequences of the same proposition.

\begin{proposition}
	\label{prop:horizontal-torsion}
	Let $(M^{2m},g,J,\omega,f)$, $m\geq2$, be a complete almost-K\"ahler gradient shrinking Ricci soliton. Then, for $V=\nabla f$ and $W=JV$, we have
	\begin{equation*}
		(\nabla_X\omega)(Y,Z)=0
	\end{equation*}
	whenever any one of $X,Y,Z$ lies in
	$\operatorname{span}\{V,W\}$.
\end{proposition}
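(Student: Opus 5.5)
The plan is to combine the three rigidity conclusions of Proposition~\ref{prop:first-rigidity} ($Rc$ is $J$-invariant, $\nabla_V\omega=0$, $W$ is Killing) with two pointwise facts about $\nabla\omega$:
\begin{itemize}
\item[(a)] $\dd\omega=0$, which says that the cyclic sum $(\nabla_X\omega)(Y,Z)+(\nabla_Y\omega)(Z,X)+(\nabla_Z\omega)(X,Y)$ vanishes;
\item[(b)] $\nabla_X\omega\in\Lambda_J^-$ for every $X$, by the same argument as for \eqref{eq:na_V-omega}, since differentiating $J^2=-\operatorname{Id}$ along any $X$ gives $(\nabla_XJ)J=-J(\nabla_XJ)$.
\end{itemize}
The key step is to show that $V$ can be inserted into a form slot, i.e.\ $(\nabla_Y\omega)(V,Z)=0$ for all $Y,Z$. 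The other cases then follow algebraically.

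\emph{Step 1: the Killing condition in terms of $\nabla\omega$.} Write $H:=(\Hess f)^\sharp$. Since $Rc$ is $J$-invariant, the soliton equation \eqref{eq:soliton} shows that $\Hess f$ is $J$-invariant, so $H$ commutes with $J$. Differentiating $W=JV$ gives
\begin{equation*}
\nabla_YW=(\nabla_YJ)V+JHY .
\end{equation*}
Because $H$ is symmetric and commutes with $J$, the endomorphism $JH$ is skew-adjoint. Hence the Killing condition $\Lie_Wg=0$, which says that $\nabla W$ is skew, reduces to skewness of $Y\mapsto(\nabla_YJ)V$. Using \eqref{eq:nablaV-omega}, with $g((\nabla_YJ)V,Z)=(\nabla_Y\omega)(V,Z)$, this reads
\begin{equation*}
(\nabla_Y\omega)(V,Z)+(\nabla_Z\omega)(V,Y)=0\qquad\text{for all }Y,Z.
\end{equation*}
This is the step where care with signs and conventions is needed; I expect it to be the only real obstacle.

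\emph{Step 2: combine with closedness.} Apply (a) to the triple $(V,Y,Z)$ and use $\nabla_V\omega=0$. This gives $(\nabla_Y\omega)(Z,V)=-(\nabla_Z\omega)(V,Y)$. By Step 1 the right-hand side equals $(\nabla_Y\omega)(V,Z)$, which by skew-symmetry is $-(\nabla_Y\omega)(Z,V)$. So $(\nabla_Y\omega)(Z,V)=-(\nabla_Y\omega)(Z,V)$, hence $(\nabla_Y\omega)(Z,V)=0$, and by skew-symmetry also $(\nabla_Y\omega)(V,Z)=0$.

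\emph{Step 3: the $W$ slot.} By (b), for any $Z$,
\begin{equation*}
(\nabla_Y\omega)(W,Z)=(\nabla_Y\omega)(JV,Z)=-(\nabla_Y\omega)(V,J^{-1}Z)=(\nabla_Y\omega)(V,JZ)=0 .
\end{equation*}
Combined with Step 2 and linearity, $\nabla\omega$ vanishes whenever $Y$ or $Z$ lies in $\operatorname{span}\{V,W\}$.

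\emph{Step 4: the derivative slot.} For $X=V$ the claim is Proposition~\ref{prop:first-rigidity}. For $X=W$, apply (a) to the triple $(W,Y,Z)$:
\begin{equation*}
(\nabla_W\omega)(Y,Z)=-(\nabla_Y\omega)(Z,W)-(\nabla_Z\omega)(W,Y)=0
\end{equation*}
by Step 3. Linearity in $X$ then completes the proof.
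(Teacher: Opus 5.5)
Your proof is correct and follows the paper's argument: the Killing equation for $W$ (with $J(\Hess f)^\sharp$ skew-adjoint) gives the symmetric relation, $\dd\omega=0$ together with $\nabla_V\omega=0$ gives the antisymmetric one, and the $W$ form-slot follows from the $J$-anti-invariance of $\nabla_Y\omega$, which is equivalent to the paper's $(\nabla_XJ)W=-J(\nabla_XJ)V$. The only minor difference is the derivative slot $X=W$: you apply $\dd\omega=0$ to $(W,Y,Z)$ and use the form-slot vanishing already established, whereas the paper cites the quasi-K\"ahler identity $\nabla_{JX}J=-J\nabla_XJ$; both routes are valid.
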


\begin{proof}
	Since $\omega(Y,Z)=g(JY,Z)$ and $\nabla g=0$, we have
	\begin{equation}\label{eq:na-omega&na-J}
		(\nabla_X\omega)(Y,Z)
		=g\bigl((\nabla_XJ)Y,Z\bigr).
	\end{equation}
	By this identity and the skew-symmetry of $\nabla_X\omega$ in $Y$ and $Z$, it suffices to prove
	\begin{equation}\label{eq:torsion-reduction}
		(\nabla_XJ)V=(\nabla_XJ)W=0
		\quad\text{for all }X,
		\qquad
		\nabla_VJ=\nabla_WJ=0.
	\end{equation}
	
	Now, we establish the first two identities. Since $W=JV$ and $\nabla_XV=(\Hess f)^\sharp X$,
	\begin{equation}\label{eq:na_XW}
		\nabla_XW
		=(\nabla_XJ)V+J(\Hess f)^\sharp X.
	\end{equation}
	The soliton equation \eqref{eq:soliton} and the $J$-invariance of $Rc$ imply that $(\Hess f)^\sharp$ commutes with $J$, so $J(\Hess f)^\sharp$ is skew-adjoint. Substituting \eqref{eq:na_XW} into the Killing equation $\Lie_Wg=0$ from Proposition~\ref{prop:first-rigidity} and using this skew-adjointness, we obtain
	\begin{equation}\label{eq:torsion-skew}
		g\bigl((\nabla_XJ)V,Y\bigr)
		+g\bigl((\nabla_YJ)V,X\bigr)=0.
	\end{equation}
	
	On the other hand, using $\dd\omega=0$ and
	$\nabla_V\omega=0$, we obtain
	\begin{equation}\label{eq:torsion-symmetric}
		\begin{aligned}
			0
			&=(\dd\omega)(V,X,Y)\\
			&=(\nabla_V\omega)(X,Y)
			+(\nabla_X\omega)(Y,V)
			+(\nabla_Y\omega)(V,X)\\
			&=-g\bigl((\nabla_XJ)V,Y\bigr)
			+g\bigl((\nabla_YJ)V,X\bigr),
		\end{aligned}
	\end{equation}
	where we have used \eqref{eq:na-omega&na-J} in the last equality. Comparing \eqref{eq:torsion-skew} and \eqref{eq:torsion-symmetric} yields
	\begin{equation*}
		g\bigl((\nabla_XJ)V,Y\bigr)=0
	\end{equation*}
	for all $X,Y$. Therefore,
	\begin{equation*}
		(\nabla_XJ)V=0.
	\end{equation*}
	Differentiating $J^2=-\operatorname{Id}$ and using $W=JV$, we then obtain
	\begin{equation*}
		(\nabla_XJ)W
		=(\nabla_XJ)(JV)
		=-J(\nabla_XJ)V
		=0.
	\end{equation*}
	
	It remains to prove the last two identities in
	\eqref{eq:torsion-reduction}. Proposition~\ref{prop:first-rigidity}
	gives $\nabla_V\omega=0$, equivalently $\nabla_VJ=0$ by \eqref{eq:na-omega&na-J}. Moreover, for any vector field $X$, every almost-K\"ahler structure satisfies
	\begin{equation}\label{eq:quasi-Kahler-identity}
		\nabla_{JX}J=-J\nabla_XJ;
	\end{equation}
	see \cite[equation~(7)]{Apostolov-Draghici:03}. Taking $X=V$ gives
	\begin{equation*}
		\nabla_WJ
		=\nabla_{JV}J
		=-J\nabla_VJ
		=0.
	\end{equation*}
	Thus all four identities in \eqref{eq:torsion-reduction} hold, which finishes the proof of Proposition~\ref{prop:horizontal-torsion}.
\end{proof}

Next, by Proposition~\ref{prop:first-rigidity}, the Ricci tensor is $J$-invariant, and hence
\begin{equation*}
	Rc(JX,Y)=-Rc(X,JY).
\end{equation*}
Together with the symmetry of $Rc$, this identity shows that
\begin{equation}\label{eq:ricci-form}
	\rho(X,Y):=Rc(JX,Y)
\end{equation}
defines a real $J$-invariant two-form, called the \emph{Ricci form}. With this notation, we conclude the section by deriving identities needed in the next section and establishing the symmetry properties of $V$ and $W$.

\begin{lemma}\label{lem:soliton-symmetries}
	Let $(M^{2m},g,J,\omega,f)$, $m\geq2$, be a complete almost-K\"ahler gradient shrinking Ricci soliton. Then, for $V=\nabla f$ and $W=JV$, we have
	\begin{equation}\label{eq:beta-rho}
		\beta=2\lambda\omega-2\rho,
		\qquad
		\dd\rho=0,
	\end{equation}
	where $\beta=\Lie_V\omega$ and $\rho$ is the Ricci form defined in \eqref{eq:ricci-form}. Moreover,
	\begin{equation*}
		\Lie_VJ=0,
		\qquad
		\Lie_W\omega=\Lie_WJ=0.
	\end{equation*}
	Thus $W$ preserves the full almost-K\"ahler structure
	$(g,J,\omega)$.
\end{lemma}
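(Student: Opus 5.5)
First I will obtain $\beta=2\lambda\omega-2\rho$ directly from the decomposition \eqref{eq:beta-decomp} in Lemma~\ref{lem:Lie-decomposition}. Proposition~\ref{prop:first-rigidity} gives $\nabla_V\omega=0$ and $Rc^{\janti}=0$, so $Rc^{\jinv}=Rc$ and $E^{\jinv}=Rc-\frac{R}{2m}g$. Hence
\begin{equation*}
\rho_0(X,Y)=Rc(JX,Y)-\frac{R}{2m}\,g(JX,Y)=\rho(X,Y)-\frac{R}{2m}\,\omega(X,Y).
\end{equation*}
Substituting this into \eqref{eq:beta-decomp} gives
\begin{equation*}
\beta=\frac{\Delta_gf}{m}\,\omega+\frac{R}{m}\,\omega-2\rho .
\end{equation*}
By Lemma~\ref{lem:Hamilton}(i), $R+\Delta_gf=2m\lambda$, so the coefficient of $\omega$ is $2\lambda$, as claimed. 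Closedness of $\rho$ then follows at once: $\beta=\dd W^\flat$ is exact by \eqref{eq:beta} and $\dd\omega=0$, so $2\,\dd\rho=2\lambda\,\dd\omega-\dd\beta=0$.

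Next I will treat $\Lie_VJ$. Write $J=\omega^\sharp$, that is, $g(JX,Y)=\omega(X,Y)$. For any vector field $U$ we have $(\Lie_Ug)(JX,Y)+g((\Lie_UJ)X,Y)=(\Lie_U\omega)(X,Y)$. With $U=V$ we have $\Lie_Vg=2\Hess f=2\lambda g-2Rc$. This gives
\begin{equation*}
g((\Lie_VJ)X,Y)=\beta(X,Y)-2\lambda\,\omega(X,Y)+2Rc(JX,Y)=\beta-2\lambda\omega+2\rho ,
\end{equation*}
which vanishes by the first part. Alternatively, one can compute $\Lie_VJ=\nabla_VJ-[\nabla V,J]$; here $\nabla_VJ=0$ and $(\Hess f)^\sharp$ commutes with $J$ because $Rc$ is $J$-invariant.

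For $W$, the identity $\Lie_W\omega=0$ was already established in Part~II of the proof of Lemma~\ref{lem:two-identities}, via Cartan's formula and $\iota_W\omega=-\dd f$. Proposition~\ref{prop:first-rigidity} gives $\Lie_Wg=0$. The same formula relating $\Lie_Ug$, $\Lie_UJ$ and $\Lie_U\omega$, now with $U=W$, then yields $g((\Lie_WJ)X,Y)=0$, so $\Lie_WJ=0$. Hence $W$ preserves $g$, $J$ and $\omega$.

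There is no serious obstacle here. The only care needed is bookkeeping: the sign conventions relating $\rho_0$, $\rho$ and $E^{\jinv}$, and the trace identity that converts $\Delta_gf+R$ into $2m\lambda$.
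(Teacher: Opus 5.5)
Your proposal is correct and follows the paper's proof. The derivation of $\beta=2\lambda\omega-2\rho$ from \eqref{eq:beta-decomp} and Lemma~\ref{lem:Hamilton}(i), the closedness of $\rho$, and the argument for $\Lie_W\omega=\Lie_WJ=0$ (Cartan's formula plus the Leibniz identity $(\Lie_U\omega)(X,Y)=(\Lie_Ug)(JX,Y)+g((\Lie_UJ)X,Y)$ with $\Lie_Wg=0$) are exactly the paper's.

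The only variation is in $\Lie_VJ$. Your main route applies the same Leibniz identity with $\Lie_Vg=2\Hess f$, which shows that $\Lie_VJ=0$ is equivalent to $\beta=2\lambda\omega-2\rho$. The paper instead computes $(\Lie_VJ)X=(\nabla_VJ)X-\nabla_{JX}V+J\nabla_XV$ directly, which is precisely the alternative you mention.
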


\begin{proof}
	First, by Proposition~\ref{prop:first-rigidity}, $Rc$ is $J$-invariant and $\nabla_V\omega=0$. Using these facts and Lemma~\ref{lem:Hamilton}(i) in \eqref{eq:beta-decomp}, we obtain
	\begin{equation*}
		\beta
		=\frac{\Delta_gf}{m}\omega-2\rho_0
		=\frac{2m\lambda-R}{m}\omega
		-2\left(\rho-\frac{R}{2m}\omega\right)
		=2\lambda\omega-2\rho.
	\end{equation*}
	By \eqref{eq:beta}, $\beta$ is closed. Since $\omega$ is also closed, taking the exterior derivative gives $\dd\rho=0$, proving \eqref{eq:beta-rho}.
	
	Next, raising an index in the soliton equation \eqref{eq:soliton} gives
	\begin{equation*}
		(\Hess f)^\sharp=\lambda \operatorname{Id}-Rc^\sharp.
	\end{equation*}
	Since $Rc$ is $J$-invariant, $(\Hess f)^\sharp$ commutes with $J$. Moreover, by \eqref{eq:na-omega&na-J}, $\nabla_V\omega=0$ implies $\nabla_VJ=0$. Thus, for any tangent vector $X$,
	\begin{equation*}
		\begin{split}
			(\Lie_VJ)X
			&=(\nabla_VJ)X - \nabla_{JX}V + J\nabla_XV\\
			&=(\nabla_VJ)X
			-(\Hess f)^\sharp(JX)
			+J(\Hess f)^\sharp X\\
			&=0.
		\end{split}
	\end{equation*}
	Therefore, $\Lie_VJ=0$.
	
	Finally, since $\iota_W\omega=-\dd f$ and $\dd\omega=0$, Cartan's formula yields
	\begin{equation*}
		\Lie_W\omega
		=\dd (\iota_W\omega)+\iota_W\dd \omega
		=0.
	\end{equation*}
	Together with $\Lie_Wg=0$ from Proposition~\ref{prop:first-rigidity}, this implies
	\begin{equation*}
		0=(\Lie_W\omega)(X,Y)
		=(\Lie_Wg)(JX,Y)+g((\Lie_WJ)X,Y)
		=g((\Lie_WJ)X,Y)
	\end{equation*}
	for all tangent vectors $X,Y$. Hence $\Lie_WJ=0$, completing the proof.
\end{proof}

\section{The weighted Sekigawa identity and the proof of Theorem~\ref{thm:main}}
\label{sec:weighted-Sekigawa}

In this section, we combine the results of Sections~\ref{sec:coercive} and~\ref{sec:torsion} with the pointwise identity of Apostolov--Dr\u{a}ghici--Moroianu \cite[Proposition~2.1]{Apostolov-Draghici-Moroianu:01}, which recovers Sekigawa's formula \cite{Sekigawa:87} upon integration in the compact case. We recast the resulting identity in weighted divergence form and use it to prove Theorem~\ref{thm:main}.

Following \cite[Proposition~2.1]{Apostolov-Draghici-Moroianu:01} and \cite[equation~(12)]{Apostolov-Draghici:03}, we define
\begin{equation}\label{eq:phi-def}
	\phi(X,Y):=\langle\nabla_{JX}\omega,\nabla_Y\omega\rangle.
\end{equation}
By \eqref{eq:quasi-Kahler-identity}, $\phi$ is a real $J$-invariant two-form. Furthermore, it is semipositive since $\phi(X,JX)=|\nabla_{JX}\omega|^2\geq0$. For notational convenience, we also set
\begin{equation}\label{eq:chi-def}
	\chi:=\phi-\nabla^{\ast}\nabla\omega.
\end{equation}

\begin{lemma}\label{lem:chi-properties}
	Let $(M^{2m},g,J,\omega,f)$, $m\geq2$, be a complete almost-K\"ahler gradient shrinking Ricci soliton.
	Then the star-Ricci form $\rho^{\ast}$ and the Ricci form $\rho$ defined in \eqref{eq:rho*} and \eqref{eq:ricci-form}, respectively, satisfy
	\begin{equation}\label{eq:rho-star}
		\rho^{\ast}=\rho+\frac12\nabla^{\ast}\nabla\omega.
	\end{equation}
	Moreover, the two-form $\chi$ defined in \eqref{eq:chi-def} is closed and satisfies
	\begin{equation}\label{eq:chi-data}
		\Lambda\chi=\langle\chi,\omega\rangle=-\frac12|\nabla\omega|^2,
		\qquad
		\chi(V,W)=0,
	\end{equation}
	where $V=\nabla f$ and $W=JV$.
\end{lemma}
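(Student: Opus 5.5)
The plan is to get \eqref{eq:rho-star} from the Weitzenb\"ock formula for the harmonic form $\omega$, then obtain the closedness of $\chi$ by comparing with the (closed) Ricci form of the canonical Hermitian connection. The two pieces of data in \eqref{eq:chi-data} are pointwise computations.

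\emph{Step 1: \eqref{eq:rho-star}.} I apply the Weitzenb\"ock formula for two-forms to $\omega$. Since $\omega$ is harmonic, $\Delta_H\omega=0$. In the curvature convention fixed in Section~\ref{sec:pre}, this gives
\begin{equation*}
0=\nabla^{\ast}\nabla\omega+\omega\bigl(Rc^\sharp\,\cdot,\cdot\bigr)+\omega\bigl(\cdot,Rc^\sharp\,\cdot\bigr)-2\mathcal R(\omega).
\end{equation*}
By Proposition~\ref{prop:first-rigidity}, $Rc$ is $J$-invariant, so $Rc^\sharp$ commutes with $J$. Hence the middle two terms equal $2\rho$, with $\rho$ as in \eqref{eq:ricci-form}. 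Since $\mathcal R(\omega)=\rho^{\ast}$, this yields $\rho^{\ast}=\rho+\frac12\nabla^{\ast}\nabla\omega$. The only real care needed is with signs and normalizations: the curvature sign convention and the factor $\frac12$ in the two-form inner product \eqref{eq:metric-convention}. I would fix these by testing on a flat or K\"ahler model.

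\emph{Step 2: closedness of $\chi$; expected main obstacle.} I would invoke the identity of \cite[Proposition~2.1]{Apostolov-Draghici-Moroianu:01} (see also \cite[\S~3]{Apostolov-Draghici:03}). It expresses the Ricci form $\gamma$ of the canonical Hermitian connection in terms of $\rho^{\ast}$ and $\phi$, namely $\gamma=\rho^{\ast}-\frac12\phi$ up to checking the constant in our conventions. The form $\gamma$ is closed, since it represents the curvature of the anticanonical line bundle with its induced connection. Substituting Step 1 gives
\begin{equation*}
\gamma=\rho+\tfrac12\nabla^{\ast}\nabla\omega-\tfrac12\phi=\rho-\tfrac12\chi.
\end{equation*}
Since $\dd\rho=0$ by Lemma~\ref{lem:soliton-symmetries}, it follows that $\dd\chi=0$. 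Matching the normalizations of $\phi$ and $\gamma$ with those of the references is where I expect the work to concentrate.

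\emph{Step 3: the trace identity in \eqref{eq:chi-data}.} Because $|\omega|^2=m$ is constant,
\begin{equation*}
\Lambda(\nabla^{\ast}\nabla\omega)=\langle\nabla^{\ast}\nabla\omega,\omega\rangle=-\tfrac12\Delta_g|\omega|^2+|\nabla\omega|^2=|\nabla\omega|^2.
\end{equation*}
In a $J$-adapted frame, $\Lambda\phi=\frac12\sum_i\phi(e_i,Je_i)$. Since $\phi(X,JX)=|\nabla_{JX}\omega|^2$, this equals $\frac12|\nabla\omega|^2$. Therefore $\Lambda\chi=\frac12|\nabla\omega|^2-|\nabla\omega|^2=-\frac12|\nabla\omega|^2$.

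\emph{Step 4: $\chi(V,W)=0$.} First, $\phi(V,W)=\langle\nabla_{JV}\omega,\nabla_W\omega\rangle=|\nabla_W\omega|^2=0$ by Proposition~\ref{prop:horizontal-torsion}.

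For the other term, I differentiate the identity $(\nabla_X\omega)(V,W)=0$, valid for all $X$ by Proposition~\ref{prop:horizontal-torsion}, in a direction $Y$. This gives
\begin{equation*}
(\nabla^2_{Y,X}\omega)(V,W)+(\nabla_X\omega)(\nabla_YV,W)+(\nabla_X\omega)(V,\nabla_YW)=0.
\end{equation*}
The last two terms vanish, again by Proposition~\ref{prop:horizontal-torsion}, because one slot is still $W$ or $V$. Hence $(\nabla^2_{Y,X}\omega)(V,W)=0$ for all $X,Y$. Tracing over $X=Y=e_i$ gives $(\nabla^{\ast}\nabla\omega)(V,W)=0$, and therefore $\chi(V,W)=0$.
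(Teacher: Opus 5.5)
Your proposal is correct and follows the paper's proof essentially step for step. The steps are: the Weitzenb\"ock formula for the harmonic form $\omega$ with $J$-invariant Ricci tensor; closedness of $\chi$ via the closed Ricci form $\rho^{\ast}-\frac12\phi$ of the canonical Hermitian connection together with $\dd\rho=0$; the trace computation from $|\omega|^2=m$; and differentiating the transversality of Proposition~\ref{prop:horizontal-torsion} to get $(\nabla^{\ast}\nabla\omega)(V,W)=0$. Your differentiated display omits the term $(\nabla_{\nabla_YX}\omega)(V,W)$, which is harmless because it vanishes by the same proposition; the paper instead removes it by working in a frame with $\nabla_{e_i}e_j=0$ at $p$.
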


\begin{proof}
	First, by Proposition~\ref{prop:first-rigidity}, the Ricci tensor is $J$-invariant. Thus, with our conventions, the Hodge--Weitzenb\"ock formula \cite[\S~3.4, equations~(8)--(10)]{Apostolov-Draghici:03} reads
	\begin{equation*}
		\Delta_H\omega
		=\nabla^{\ast}\nabla\omega+2\rho-2\rho^\ast.
	\end{equation*}
	Since $\omega$ is harmonic, this gives \eqref{eq:rho-star}.
	
	Next, following \cite[\S~7]{Apostolov-Draghici:03}, we prove that $\chi$ is closed. Consider Gauduchon's first canonical Hermitian connection \cite{Gauduchon:97},
	\begin{equation*}
		\widetilde{\nabla}_XY
		=\nabla_XY-\frac12J(\nabla_XJ)Y,
	\end{equation*}
	and denote its Ricci form by $\tilde{\rho}$. As shown in \cite[\S~7]{Apostolov-Draghici:03}, Chern--Weil theory implies
	that $\tilde{\rho}$ is closed, and the following comparison formula holds:
	\begin{equation*}
		\tilde{\rho}=\rho^\ast-\frac12\phi.
	\end{equation*}
	By \eqref{eq:rho-star} and \eqref{eq:chi-def}, we obtain
	\begin{equation*}
		\begin{split}
			\tilde{\rho}
			&=\rho+\frac12\nabla^{\ast}\nabla\omega-\frac12\phi\\
			&=\rho-\frac12\chi.
		\end{split}
	\end{equation*}
	Since Lemma~\ref{lem:soliton-symmetries} gives $\dd\rho=0$, we obtain $\dd\chi=0$.
	
	It remains to verify \eqref{eq:chi-data}. Since $|\omega|^2=m$ is constant,
	\begin{equation*}
		0=\frac12\Delta_g|\omega|^2
		=-\langle\nabla^{\ast}\nabla\omega,\omega\rangle
		+|\nabla\omega|^2.
	\end{equation*}
	Consequently,
	\begin{equation}\label{eq:trace-na*na-omega}
		\Lambda(\nabla^{\ast}\nabla\omega)=\langle\nabla^{\ast}\nabla\omega,\omega\rangle=|\nabla\omega|^2.
	\end{equation}
	In an orthonormal $J$-adapted frame $\{e_a,Je_a\}_{a=1}^m$, \eqref{eq:phi-def} yields
	\begin{equation*}
		\Lambda\phi
		=\sum_{a=1}^m\phi(e_a,Je_a)
		=\sum_{a=1}^m|\nabla_{Je_a}\omega|^2
		=\frac12|\nabla\omega|^2,
	\end{equation*}
	where the last equality uses \eqref{eq:na-omega&na-J}, \eqref{eq:quasi-Kahler-identity}, and the orthogonality of $J$.
	It follows that
	\begin{equation*}
		\Lambda\chi
		=\Lambda\phi-\Lambda(\nabla^{\ast}\nabla\omega)
		=-\frac12|\nabla\omega|^2.
	\end{equation*}
	
	Finally, \eqref{eq:phi-def} and Proposition~\ref{prop:horizontal-torsion} imply
	\begin{equation*}
		\phi(V,W)=\langle\nabla_W\omega,\nabla_W\omega\rangle = 0.
	\end{equation*}
	Fix $p\in M$ and choose a local orthonormal frame $\{e_i\}_{i=1}^{2m}$ satisfying $\nabla_{e_i}e_j=0$ at $p$. By Proposition~\ref{prop:horizontal-torsion}, $(\nabla_{e_i}\omega)(V,W)\equiv0$ for each $i$. Differentiating in the $e_i$ direction and evaluating at $p$ gives
	\begin{equation*}
		0=(\nabla_{e_i}\nabla\omega)(e_i,V,W)
		+(\nabla_{e_i}\omega)(\nabla_{e_i}V,W)
		+(\nabla_{e_i}\omega)(V,\nabla_{e_i}W).
	\end{equation*}
	The last two terms vanish by Proposition~\ref{prop:horizontal-torsion}. Therefore, at $p$,
	\begin{equation*}
		(\nabla^{\ast}\nabla\omega)(V,W)
		=-\sum_{i=1}^{2m}
		(\nabla_{e_i}\nabla\omega)(e_i,V,W)
		=0.
	\end{equation*}
	Since $p$ is arbitrary, this and $\phi(V,W)=0$ imply $\chi(V,W)=0$, which finishes the proof of Lemma~\ref{lem:chi-properties}.
\end{proof}

Following \cite[pp.~776, 778]{Apostolov-Draghici-Moroianu:01}, let $P_J^-:\Lambda^2\to\Lambda_J^-$ denote the orthogonal projection, and let $\mathbb J$ be the natural complex structure on $\Lambda_J^-$, given by
\begin{equation*}
	(\mathbb J\alpha)(X,Y):=-\alpha(JX,Y),
	\qquad \alpha\in\Lambda_J^-.
\end{equation*}
We then define the projected curvature operator and its component anticommuting with $\mathbb J$ by
\begin{equation}\label{eq:R''-def}
	\widetilde{\mathcal R}
	:=P_J^-\circ\mathcal R\big|_{\Lambda_J^-},
	\qquad
	\widetilde{\mathcal R}^{\prime\prime}
	:=\frac12\left(
	\widetilde{\mathcal R}
	+\mathbb J\circ\widetilde{\mathcal R}\circ\mathbb J
	\right).
\end{equation}

We also define the one-form $\zeta$ by
\begin{equation}\label{eq:zeta-def}
	\zeta(X):=\langle\rho^{\ast},\nabla_X\omega\rangle.
\end{equation}
With the conventions fixed above, the identity of Apostolov--Dr\u{a}ghici--Moroianu \cite[Proposition~2.1, equation~(7)]{Apostolov-Draghici-Moroianu:01} becomes
\begin{equation}\label{eq:pointwise-Sekigawa}
	\Delta_H|\nabla\omega|^2
	=
	8\delta\zeta
	-8|\widetilde{\mathcal R}^{\prime\prime}|^2
	-|\nabla^{\ast}\nabla\omega|^2
	-|\phi|^2
	+4\langle\rho,\chi\rangle.
\end{equation}
Here we used $Rc^{\janti}=0$ from Proposition~\ref{prop:first-rigidity} and the definition of $\chi$ in \eqref{eq:chi-def}. In the compact case, integration of \eqref{eq:pointwise-Sekigawa} recovers Sekigawa's formula \cite[Proposition~3.2]{Sekigawa:87}.

We next express the Hodge Laplacian, codifferential, and Ricci terms in \eqref{eq:pointwise-Sekigawa} as weighted divergences, yielding the following pointwise identity.

\begin{lemma}
	\label{lem:weighted-Sekigawa-divergence}
	Let $(M^{2m},g,J,\omega,f)$, $m\geq2$, be a complete almost-K\"ahler
	gradient shrinking Ricci soliton. Then, for $V=\nabla f$ and $W=JV$, we have
	\begin{equation}\label{eq:density-Sekigawa}
		\begin{aligned}
			&\divergence\!\left\{
			e^{-f}\left[
			\nabla|\nabla\omega|^2
			+|\nabla\omega|^2V
			-8\zeta^\sharp
			+2\left(
			\ast^{-1}
			\frac{W^\flat\wedge\chi\wedge\omega^{m-2}}{(m-2)!}
			\right)^\sharp
			\right]\right\}\\
			&\qquad=
			e^{-f}\left(
			8|\widetilde{\mathcal R}^{\prime\prime}|^2
			+|\nabla^{\ast}\nabla\omega|^2
			+|\phi|^2
			+2\lambda|\nabla\omega|^2
			\right),
		\end{aligned}
	\end{equation}
	where $\zeta$, $\chi$, $\widetilde{\mathcal R}^{\prime\prime}$ and $\phi$ are defined in \eqref{eq:zeta-def}, \eqref{eq:chi-def}, \eqref{eq:R''-def}, and \eqref{eq:phi-def}, respectively.
\end{lemma}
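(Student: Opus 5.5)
The identity \eqref{eq:density-Sekigawa} is pointwise, so I would prove it by expanding the divergence of each of the four vector fields inside the braces separately. I would then substitute the Apostolov--Dr\u{a}ghici--Moroianu identity \eqref{eq:pointwise-Sekigawa} for $\Delta_H|\nabla\omega|^2=-\Delta_g|\nabla\omega|^2$, and use the rigidity from Sections~\ref{sec:coercive} and~\ref{sec:torsion} to cancel the unwanted terms. Throughout, write $u:=|\nabla\omega|^2$.

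\emph{First three terms.} Directly,
\begin{equation*}
\divergence(e^{-f}\nabla u)+\divergence(e^{-f}uV)=e^{-f}\bigl(\Delta_gu+u\Delta_gf-u|V|^2\bigr).
\end{equation*}
Here the two $\langle V,\nabla u\rangle$ terms cancel. Since $\delta\zeta=-\divergence\zeta^\sharp$ on one-forms,
\begin{equation*}
-8\divergence(e^{-f}\zeta^\sharp)=e^{-f}\bigl(8\delta\zeta+8\zeta(V)\bigr).
\end{equation*}
Moreover $\zeta(V)=\langle\rho^\ast,\nabla_V\omega\rangle=0$ by Proposition~\ref{prop:first-rigidity}. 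Inserting \eqref{eq:pointwise-Sekigawa} for $\Delta_gu$ cancels the $\delta\zeta$ terms, and the sum of the first three contributions becomes
\begin{equation*}
e^{-f}\Bigl(8|\widetilde{\mathcal R}''|^2+|\nabla^\ast\nabla\omega|^2+|\phi|^2-4\langle\rho,\chi\rangle+u(\Delta_gf-|V|^2)\Bigr).
\end{equation*}
It therefore remains to show that the last term contributes exactly $e^{-f}\bigl(4\langle\rho,\chi\rangle+u(2\lambda-\Delta_gf+|V|^2)\bigr)$.

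\emph{Last term.} Let $\xi:=\ast^{-1}\bigl(W^\flat\wedge\chi\wedge\omega^{m-2}/(m-2)!\bigr)$, a one-form. Then $\divergence(e^{-f}\xi^\sharp)=e^{-f}\bigl(-\delta\xi-\xi(V)\bigr)$. For the codifferential I would use $\delta\xi=\pm\ast\dd\ast\xi$, where $\ast\xi$ is, up to the same sign, $W^\flat\wedge\chi\wedge\omega^{m-2}/(m-2)!$. Since $\dd\chi=0$ (Lemma~\ref{lem:chi-properties}), $\dd\omega=0$ and $\dd W^\flat=\beta$, this gives
\begin{equation*}
\dd\ast\xi=\pm\,\beta\wedge\chi\wedge\omega^{m-2}/(m-2)!.
\end{equation*}
Next I apply the polarized form of Lemma~\ref{lem:Weil-two-forms}. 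For $J$-invariant $\beta$ it reads $\beta\wedge\chi\wedge\omega^{m-2}/(m-2)!=(\Lambda\beta\,\Lambda\chi-\langle\beta,\chi\rangle)\dd\mu_g$; the check with $\beta=\chi=a\omega$ gives $m^2a^2-ma^2$, as required. Now use $\beta=2\lambda\omega-2\rho$ (Lemma~\ref{lem:soliton-symmetries}), $\Lambda\beta=2m\lambda-R=\Delta_gf$, and $\Lambda\chi=-u/2$ from \eqref{eq:chi-data}. This yields
\begin{equation*}
\Lambda\beta\,\Lambda\chi-\langle\beta,\chi\rangle=u\bigl(\lambda-\tfrac12\Delta_gf\bigr)+2\langle\rho,\chi\rangle.
\end{equation*}
Similarly, $\xi(V)\dd\mu_g=\dd f\wedge\ast\xi$, up to the sign fixed by \eqref{eq:hodge*}. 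The form $\dd f\wedge W^\flat$ is $J$-invariant with $\Lambda=|V|^2$, and $\chi(V,W)=0$ by \eqref{eq:chi-data}. The polarized Weil identity then gives $\xi(V)=|V|^2\Lambda\chi-\chi(V,W)=-\tfrac12u|V|^2$. Multiplying by $2$, the last term contributes $e^{-f}\bigl(u(2\lambda-\Delta_gf)+4\langle\rho,\chi\rangle+u|V|^2\bigr)$, which is exactly what is needed.

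\emph{Main obstacle.} The only delicate point is bookkeeping of signs and normalizations. These are the sign of $\delta=\pm\ast\dd\ast$ on one-forms in even dimension, the identity $\alpha\wedge\ast\gamma=\langle\alpha,\gamma\rangle\dd\mu_g$ applied to one-forms, and the factor $\tfrac12$ in the two-form inner product. I would verify these in the flat model with $m=2$, checking that both $\pm$ signs above resolve to the signs written. Consistency is also a useful check: the $\langle\rho,\chi\rangle$ and $u\Delta_gf$ terms must cancel against the first three contributions, and this forces the signs. Everything else is routine given Propositions~\ref{prop:first-rigidity} and \ref{prop:horizontal-torsion} and Lemmas~\ref{lem:soliton-symmetries} and \ref{lem:chi-properties}.
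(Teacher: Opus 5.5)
Your proposal is correct and follows essentially the same route as the paper: expand the weighted divergences, use $\zeta(V)=0$ to absorb $\delta\zeta$, and rewrite $4\langle\rho,\chi\rangle$ using $\beta=2\lambda\omega-2\rho$, $\dd\chi=0$, $\chi(V,W)=0$, $\Lambda\chi=-\frac12|\nabla\omega|^2$ and the polarized Weil identity. The paper gets the last term in one step, by applying $\dd$ to $e^{-f}W^\flat\wedge\chi\wedge\omega^{m-2}/(m-2)!$ and using $\dd\Theta=\divergence\bigl((\ast^{-1}\Theta)^\sharp\bigr)\dd\mu_g$, which is exactly your split $-\delta\xi-\xi(V)$; moreover, your hedged signs need no model check, since $\ast\xi=W^\flat\wedge\chi\wedge\omega^{m-2}/(m-2)!$ holds exactly by definition of $\xi$ and $\delta=-\ast\dd\ast$ on one-forms in every dimension, giving $-\delta\xi=\Lambda\beta\,\Lambda\chi-\langle\beta,\chi\rangle$ and $\xi(V)=-\frac12|V|^2|\nabla\omega|^2$ as you wrote.
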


\begin{proof}
	First, since $V=\nabla f$, the product rule yields
	\begin{equation}\label{eq:weighted-scalar-divergence}
		\divergence\!\left[
		e^{-f}\left(
		\nabla|\nabla\omega|^2+|\nabla\omega|^2V
		\right)\right]
		=
		e^{-f}\left[
		\Delta_g|\nabla\omega|^2
		+\bigl(\Delta_gf-|V|^2\bigr)|\nabla\omega|^2
		\right].
	\end{equation}
	
	Next, Proposition~\ref{prop:first-rigidity} gives
	$\nabla_V\omega=0$, and therefore
	\begin{equation*}
		\zeta(V)=\langle\rho^{\ast},\nabla_V\omega\rangle=0.
	\end{equation*}
	Since $\delta\zeta=-\divergence(\zeta^\sharp)$, it follows that
	\begin{equation}\label{eq:weighted-zeta-divergence}
		\divergence\!\left(-8e^{-f}\zeta^\sharp\right)
		=
		8e^{-f}\bigl(\delta\zeta+\zeta(V)\bigr)
		=
		8e^{-f}\delta\zeta.
	\end{equation}
	
	It remains to rewrite the Ricci term $4\langle\rho,\chi\rangle$. By \eqref{eq:beta-rho} and Lemma~\ref{lem:Hamilton}(i), $\beta$ is $J$-invariant and
	\begin{equation*}
		\Lambda\beta
		=2m\lambda-R
		=\Delta_gf.
	\end{equation*}
	Since $\Lambda\chi=-|\nabla\omega|^2/2$ by Lemma~\ref{lem:chi-properties}, the polarized form of Lemma~\ref{lem:Weil-two-forms} (see also \cite[Lemma~4.7]{Szekelyhidi:14}) implies
	\begin{equation}\label{eq:beta-chi-Weil}
		\frac{\beta\wedge\chi\wedge\omega^{m-2}}{(m-2)!}
		=
		\left[
		-\frac12(\Delta_gf)|\nabla\omega|^2
		-\langle\beta,\chi\rangle
		\right]\dd\mu_g.
	\end{equation}
	As in {\bf Part~I} of the proof of Lemma~\ref{lem:two-identities}, $\dd f=V^\flat$ and $W=JV$ imply that $\dd f\wedge W^\flat$ is $J$-invariant and satisfies
	\begin{equation*}
		\Lambda(\dd f\wedge W^\flat)=|V|^2,
		\qquad
		\langle\dd f\wedge W^\flat,\chi\rangle
		=\chi(V,W).
	\end{equation*}
	Applying the polarized form of Lemma~\ref{lem:Weil-two-forms} once more and using \eqref{eq:chi-data}, we obtain
	\begin{equation}\label{eq:df-W-chi-Weil}
		\frac{\dd f\wedge W^\flat\wedge\chi
			\wedge\omega^{m-2}}{(m-2)!}
		=
		-\frac12|V|^2|\nabla\omega|^2\dd\mu_g.
	\end{equation}
	Using $\beta=\dd W^\flat$, $\dd\chi=0$, and $\dd\omega=0$, together with \eqref{eq:beta-chi-Weil} and \eqref{eq:df-W-chi-Weil}, we have
	\begin{equation}\label{eq:exterior-diff}
		\dd\left(
		e^{-f}
		\frac{W^\flat\wedge\chi\wedge\omega^{m-2}}{(m-2)!}
		\right)
		=
		-e^{-f}\left[
		\frac12\bigl(\Delta_gf-|V|^2\bigr)|\nabla\omega|^2
		+\langle\beta,\chi\rangle
		\right]\dd\mu_g.
	\end{equation}
	
	For any $(2m-1)$-form $\Theta$,
	\begin{equation*}
		\dd\Theta
		=
		\divergence\!\left(
		(\ast^{-1}\Theta)^\sharp
		\right)\dd\mu_g;
	\end{equation*}
	see, e.g., \cite[p.~423, equation~(16.11) and the definition of divergence]{Lee:13}. Applying this identity to \eqref{eq:exterior-diff} yields
	\begin{equation}\label{eq:pointwise-beta-chi}
		\begin{aligned}
			e^{-f}\langle\beta,\chi\rangle
			={}&
			-\frac12e^{-f}
			\bigl(\Delta_gf-|V|^2\bigr)|\nabla\omega|^2\\
			&-\divergence\!\left[
			e^{-f}\left(
			\ast^{-1}
			\frac{W^\flat\wedge\chi\wedge\omega^{m-2}}{(m-2)!}
			\right)^\sharp
			\right].
		\end{aligned}
	\end{equation}
	Moreover, by Lemmas~\ref{lem:soliton-symmetries} and \ref{lem:chi-properties}, we have
	\begin{equation*}
		\rho=\lambda\omega-\frac12\beta,
		\qquad
		\Lambda\chi=-\frac12|\nabla\omega|^2.
	\end{equation*}
	Combining these identities with \eqref{eq:pointwise-beta-chi}, we obtain
	\begin{equation}\label{eq:weighted-rho-chi}
		\begin{aligned}
			4e^{-f}\langle\rho,\chi\rangle
			={}&
			e^{-f}\bigl(-2\lambda+\Delta_gf-|V|^2\bigr)
			|\nabla\omega|^2\\
			&+2\divergence\!\left[
			e^{-f}\left(
			\ast^{-1}
			\frac{W^\flat\wedge\chi\wedge\omega^{m-2}}{(m-2)!}
			\right)^\sharp
			\right].
		\end{aligned}
	\end{equation}
	
	Finally, multiplying \eqref{eq:pointwise-Sekigawa} by $e^{-f}$, using
	$\Delta_H=-\Delta_g$ on functions, and substituting
	\eqref{eq:weighted-rho-chi}, we obtain
	\begin{align*}
		&e^{-f}\left(
		8|\widetilde{\mathcal R}^{\prime\prime}|^2
		+|\nabla^{\ast}\nabla\omega|^2
		+|\phi|^2
		+2\lambda|\nabla\omega|^2
		\right)\\
		&\qquad=
		e^{-f}\left[
		\Delta_g|\nabla\omega|^2
		+\bigl(\Delta_gf-|V|^2\bigr)|\nabla\omega|^2
		+8\delta\zeta
		\right]\\
		&\qquad\quad
		+2\divergence\!\left[
		e^{-f}\left(
		\ast^{-1}
		\frac{W^\flat\wedge\chi\wedge\omega^{m-2}}{(m-2)!}
		\right)^\sharp
		\right].
	\end{align*}
	Together with \eqref{eq:weighted-scalar-divergence} and \eqref{eq:weighted-zeta-divergence}, this yields \eqref{eq:density-Sekigawa}, completing the proof.
\end{proof}

Now, integrating the identity in Lemma~\ref{lem:weighted-Sekigawa-divergence} against suitable cutoffs yields the following weighted Sekigawa formula.

\begin{proposition}[Weighted Sekigawa identity]
	\label{prop:weighted-Sekigawa}
	Let $(M^{2m},g,J,\omega,f)$, $m\geq2$, be a complete almost-K\"ahler
	gradient shrinking Ricci soliton. Then
	\begin{equation}\label{eq:weighted-Sekigawa-integral}
		\int_M e^{-f}\left(
		8|\widetilde{\mathcal R}^{\prime\prime}|^2
		+|\nabla^{\ast}\nabla\omega|^2
		+|\phi|^2
		+2\lambda|\nabla\omega|^2
		\right)\dd\mu_g=0,
	\end{equation}
	where $\widetilde{\mathcal R}^{\prime\prime}$ and $\phi$ are defined in \eqref{eq:R''-def} and \eqref{eq:phi-def}, respectively.
\end{proposition}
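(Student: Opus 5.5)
In the compact case, I would integrate the pointwise identity \eqref{eq:density-Sekigawa} of Lemma~\ref{lem:weighted-Sekigawa-divergence} over $M$. The left-hand side is a divergence, so it integrates to zero by the divergence theorem, and this gives \eqref{eq:weighted-Sekigawa-integral} immediately.

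In the complete noncompact case, I would multiply \eqref{eq:density-Sekigawa} by the cutoff $\psi=\psi_\ell=\vartheta(f/\ell)$ from Part~III of the proof of Lemma~\ref{lem:two-identities}. Writing $Y$ for the vector field inside the braces (so the left side is $\divergence(e^{-f}Y)$) and integrating by parts gives
\begin{equation*}
\int_M\psi e^{-f}\Bigl(8|\widetilde{\mathcal R}^{\prime\prime}|^2+|\nabla^{\ast}\nabla\omega|^2+|\phi|^2+2\lambda|\nabla\omega|^2\Bigr)\dd\mu_g=-\int_M e^{-f}\langle\nabla\psi,Y\rangle\dd\mu_g .
\end{equation*}
Because $\nabla\psi=\ell^{-1}\vartheta'(f/\ell)V$, only $\langle V,Y\rangle$ enters on the right, and several terms simplify. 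First, $\zeta(V)=0$ since $\nabla_V\omega=0$. Second, $\langle V,\nabla|\nabla\omega|^2\rangle=V(|\nabla\omega|^2)$, and $V(|\nabla\omega|^2)=2\langle\nabla_V\nabla\omega,\nabla\omega\rangle$. Third, the $\chi$-term paired with $V$ equals, up to sign, $\ast$ of $V^\flat\wedge W^\flat\wedge\chi\wedge\omega^{m-2}/(m-2)!$. By the polarized Weil identity and \eqref{eq:chi-data}, this is $\tfrac12|V|^2|\nabla\omega|^2$, as in \eqref{eq:df-W-chi-Weil}. So the boundary term is bounded by
\begin{equation*}
C\ell^{-1}\int_{\{\ell\le f\le2\ell\}}e^{-f}\Bigl(|V||\nabla\nabla\omega||\nabla\omega|+|V|^2|\nabla\omega|^2\Bigr)\dd\mu_g .
\end{equation*}

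The main obstacle is controlling these tails, since we have no a priori integrability of $\nabla\omega$ or $\nabla\nabla\omega$. I would try two routes, in this order.

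\emph{First route: integrate the Hessian term by parts.} Instead of estimating $\langle\nabla\psi,\nabla|\nabla\omega|^2\rangle$ directly, move the derivative onto $\psi$. Since $\divergence(e^{-f}\nabla\psi)=e^{-f}\bigl(\ell^{-1}\vartheta'(\Delta f-|V|^2)+\ell^{-2}\vartheta''|V|^2\bigr)$ has size $O(1)e^{-f}$ on $\{\ell\le f\le 2\ell\}$, using $|V|^2\le f\le 2\ell$ and $\Delta f=m-R$ with $R\ge0$, the Hessian term is bounded by $\int_{\{\ell\le f\le2\ell\}}e^{-f}|\nabla\omega|^2$. For the $|V|^2|\nabla\omega|^2$ term, the factor $\ell^{-1}|V|^2\le 2$ gives the same bound. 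This reduces everything to showing $|\nabla\omega|^2\in L^1(e^{-f}\dd\mu_g)$. For that, I would use $|\nabla\omega|^2=2\Lambda(\nabla^*\nabla\omega)$ from \eqref{eq:trace-na*na-omega}, the identity \eqref{eq:rho-star}, and $\rho^*=\mathcal R(\omega)$. These give $|\nabla\omega|^2\le C(|Rm|+|Rc|)$ pointwise, or more cleanly $|\nabla\omega|^2=4(s^*-R)$ with $s^*=2\langle\rho^*,\omega\rangle$ bounded by $C|Rm|$.

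\emph{Second route, if $Rm$ integrability is unavailable.} Use the pointwise identity itself as a differential inequality for $u=|\nabla\omega|^2$. Dropping nonnegative terms and bounding the $\zeta$- and $\chi$-fluxes, test against $\psi^2$ and absorb using the good term $2\lambda u$ together with $|\nabla\psi|^2/\psi\le C/\ell$. The aim is a uniform bound $\int\psi^2e^{-f}u\le C$, after which Fatou gives integrability. Once integrability is in hand, let $\ell\to\infty$: the tail integrals vanish, and monotone convergence on the nonnegative left side yields \eqref{eq:weighted-Sekigawa-integral}.
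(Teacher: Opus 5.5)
Your reduction of the boundary term is correct and agrees with the paper's. The $\zeta$-flux vanishes because $\zeta(V)=0$. The $\chi$-flux cancels the $|\nabla\omega|^2V$-flux exactly, by the Weil identity and $\chi(V,W)=0$. Moving the remaining derivative onto the cutoff then leaves, with $\psi^2$ as test function, the single error term $\int_M e^{-f}|\nabla\omega|^2 h\,\dd\mu_g$, where $h:=\Delta_g(\psi^2)-\langle V,\nabla(\psi^2)\rangle$.

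The gap is in how you control this term. Route~1 needs $|\nabla\omega|^2\in L^1(e^{-f}\dd\mu_g)$ and gets it from a pointwise bound by $|Rm|$. But no weighted integrability of the full curvature tensor is available for general complete shrinkers; Lemma~\ref{lem:Munteanu-Sesum} only controls $Rc$. So that step is unjustified.

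Route~2 fails as stated. On $\{\ell\le f\le 2\ell\}$ one has $\Delta_g\psi-\langle V,\nabla\psi\rangle=\ell^{-1}\vartheta'(f/\ell)(m-f)+\ell^{-2}\vartheta''(f/\ell)|V|^2$. This is of order one, not small: its first term is nonnegative and comparable to $|\vartheta'|$. The error is therefore comparable to $\int_{\{\ell\le f\le2\ell\}}\psi\,e^{-f}|\nabla\omega|^2\,\dd\mu_g$, which carries only one power of $\psi$ and an $O(1)$ coefficient. The linear good term $2\lambda\int\psi^2e^{-f}|\nabla\omega|^2\,\dd\mu_g$ cannot absorb this where $\psi$ is small. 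You are back to needing a priori control of $|\nabla\omega|^2$ on the annulus, which you do not have.

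The missing idea is to absorb with the \emph{quadratic} good term $|\nabla^{\ast}\nabla\omega|^2$ instead of $2\lambda|\nabla\omega|^2$.
\begin{itemize}
\item Since $\langle\nabla^{\ast}\nabla\omega,\omega\rangle=|\nabla\omega|^2$ and $|\omega|^2=m$, Cauchy--Schwarz gives $|\nabla\omega|^2\le\sqrt m\,|\nabla^{\ast}\nabla\omega|$.
\item Young's inequality then gives $|\nabla\omega|^2|h|\le\varepsilon\psi^2|\nabla^{\ast}\nabla\omega|^2+C_{m,\varepsilon}h^2/\psi^2$.
\item Moreover $h^2/\psi^2\le C$, because $h=2\psi(\Delta_g\psi-\langle V,\nabla\psi\rangle)+2|\nabla\psi|^2$ and $|\nabla\psi|^2\le C\psi/\ell$.
\end{itemize}
After absorbing the $\varepsilon$-term into the left-hand side, the error is $C\int_{\{\ell\le f\le 2\ell\}}e^{-f}\dd\mu_g$. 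This involves no unknown quantity and tends to zero by Lemma~\ref{lem:Cao-Zhou}. No integrability of $\nabla\omega$ or $Rm$ is ever needed, and the argument closes directly. This is also why the test function should be $\psi^2$ rather than $\psi$.
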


\begin{proof}
	Suppose first that $M$ is compact. Integrating
	\eqref{eq:density-Sekigawa} and applying Stokes' theorem immediately
	gives \eqref{eq:weighted-Sekigawa-integral}.
	
	Suppose now that $M$ is complete and noncompact. We choose the same cutoff function as in {\bf Part III} of the proof of Lemma~\ref{lem:two-identities},
	\begin{equation*}
		\psi_{\ell}:=\vartheta\left(\tfrac{f}{\ell}\right).
	\end{equation*}
	By a direct computation, we have
	\begin{align*}
		\Delta_g\psi_{\ell}-\langle V,\nabla \psi_{\ell}\rangle
		=\tfrac{1}{\ell}\vartheta'\left(\tfrac{f}{\ell}\right)
		\bigl(\Delta_gf-|V|^2\bigr)
		+\tfrac1{\ell^2}\vartheta''\left(\tfrac{f}{\ell}\right)|V|^2.
	\end{align*}
	On the transition region $\{\ell \leq f\leq2\ell \}\cap\{\psi_{\ell}>0\}$, the estimates in \eqref{eq:half-cutoff-bounds}, Lemma~\ref{lem:Hamilton}(i)\&(iii), and the defining properties of $\vartheta$ yield
	\begin{equation}\label{eq:Sekigawa-cutoff-bounds}
		\tfrac{|\nabla \psi_{\ell}|^2}{\psi_{\ell}}\leq\tfrac{C}{\ell},
		\qquad
		\left|
		\Delta_g\psi_{\ell}-\langle V,\nabla \psi_{\ell}\rangle
		\right|\leq C,
	\end{equation}
	where $C$ is independent of $\ell$. In the following, we write $\psi=\psi_{\ell}$ for simplicity.
	
	Multiplying \eqref{eq:density-Sekigawa} by $\psi^2$ and integrating by
	parts yields
	\begin{equation}\label{eq:localized-Sekigawa}
		\begin{split}
			&\int_M e^{-f}\psi^2\left(
			8|\widetilde{\mathcal R}^{\prime\prime}|^2
			+|\nabla^{\ast}\nabla\omega|^2
			+|\phi|^2+2\lambda|\nabla\omega|^2
			\right)\dd\mu_g\\
			&\qquad=8\int_M e^{-f}\zeta\bigl(\nabla(\psi^2)\bigr)\dd\mu_g
			-2\int_M e^{-f}
			\frac{\dd(\psi^2)\wedge W^\flat\wedge\chi
				\wedge\omega^{m-2}}{(m-2)!}\\
			&\qquad\quad+\int_M e^{-f}|\nabla\omega|^2\left(
			\Delta_g(\psi^2)
			-2\langle V,\nabla(\psi^2)\rangle
			\right)\dd\mu_g.
		\end{split}
	\end{equation}
	Here we used the Hodge star identity
	\begin{equation*}
		\left\langle\nabla(\psi^2),(\ast^{-1}\Theta)^\sharp\right\rangle
		\dd\mu_g
		=\dd(\psi^2)\wedge\ast(\ast^{-1}\Theta)
		=\dd(\psi^2)\wedge\Theta,
	\end{equation*}
	for any $(2m-1)$-form $\Theta$. Since $\nabla(\psi^2)$ is parallel to $V$ and $\zeta(V)=\langle\rho^{\ast},\nabla_V\omega\rangle=0$ by Proposition~\ref{prop:first-rigidity}, the first term on the right-hand side of \eqref{eq:localized-Sekigawa} vanishes.
	
	Furthermore, since $\dd(\psi^2)\wedge W^\flat$ is $J$-invariant, the same computation leading to \eqref{eq:df-W-chi-Weil} in Lemma~\ref{lem:weighted-Sekigawa-divergence}, together with \eqref{eq:chi-data}, gives
	\begin{equation*}
		\frac{\dd(\psi^2)\wedge W^\flat\wedge\chi
			\wedge\omega^{m-2}}{(m-2)!}
		=-\frac12\langle V,\nabla(\psi^2)\rangle
		|\nabla\omega|^2\dd\mu_g.
	\end{equation*}
	Thus \eqref{eq:localized-Sekigawa} reduces to
	\begin{equation}\label{eq:localized-Sekigawa-reduced}
		\begin{split}
			&\int_M e^{-f}\psi^2\left(
			8|\widetilde{\mathcal R}^{\prime\prime}|^2
			+|\nabla^{\ast}\nabla\omega|^2
			+|\phi|^2+2\lambda|\nabla\omega|^2
			\right)\dd\mu_g\\
			&\qquad=
			\int_M e^{-f}|\nabla\omega|^2
			\left(
			\Delta_g(\psi^2)
			-\langle V,\nabla(\psi^2)\rangle
			\right)\dd\mu_g.
		\end{split}
	\end{equation}
	
	Next, by \eqref{eq:trace-na*na-omega}, we have $|\nabla\omega|^2=\langle\nabla^{\ast}\nabla\omega,\omega\rangle$. Hence, by Cauchy--Schwarz and $|\omega|^2=m$,
	\begin{equation*}
		|\nabla\omega|^4
		\leq m|\nabla^{\ast}\nabla\omega|^2.
	\end{equation*}
	Then for any $\varepsilon>0$, by Young's inequality, we have
	\begin{equation}\label{eq:epsilon-na*na-omega}
		\begin{split}
			&\left|
			\int_M e^{-f}|\nabla\omega|^2
			\left(
			\Delta_g(\psi^2)-\langle V,\nabla(\psi^2)\rangle
			\right)\dd\mu_g
			\right|\\
			&\qquad\leq
			\varepsilon\int_M e^{-f}\psi^2
			|\nabla^{\ast}\nabla\omega|^2\dd\mu_g\\
			&\qquad\quad
			+C_{m,\varepsilon}
			\int_{\{\psi>0\}}e^{-f}
			\frac{\left|
			\Delta_g(\psi^2)-\langle V,\nabla(\psi^2)\rangle
			\right|^2}{\psi^2}\dd\mu_g.
		\end{split}
	\end{equation}
	Note that, by direct computation,
	\begin{equation*}
		\Delta_g(\psi^2)-\langle V,\nabla(\psi^2)\rangle
		=
		2\psi\left(
		\Delta_g\psi-\langle V,\nabla \psi\rangle
		\right)+2|\nabla \psi|^2.
	\end{equation*}
	Therefore, on the transition region $\{\ell\leq f\leq 2\ell\}\cap\{\psi>0\}$, \eqref{eq:Sekigawa-cutoff-bounds} implies
	\begin{equation*}
		\begin{aligned}
			\frac{\left|
				\Delta_g(\psi^2)-\langle V,\nabla(\psi^2)\rangle
				\right|^2}{\psi^2}
			\leq
			8\left|
			\Delta_g\psi-\langle V,\nabla \psi\rangle
			\right|^2
			+8\frac{|\nabla \psi|^4}{\psi^2}
			\leq C.
		\end{aligned}
	\end{equation*}
	Choosing $\varepsilon>0$ sufficiently small, we absorb the first term on the right-hand side of \eqref{eq:epsilon-na*na-omega} into the left-hand side of \eqref{eq:localized-Sekigawa-reduced} and obtain
	\begin{equation*}
		\int_M e^{-f}\psi^2\left(
		8|\widetilde{\mathcal R}^{\prime\prime}|^2
		+|\nabla^{\ast}\nabla\omega|^2
		+|\phi|^2+2\lambda|\nabla\omega|^2
		\right)\dd\mu_g
		\leq
		C\int_{\{\ell\leq f\leq2\ell\}}e^{-f}\dd\mu_g.
	\end{equation*}
	The right-hand side tends to zero as $\ell \rightarrow \infty$ by Lemma~\ref{lem:Cao-Zhou}.
	
	Since $\lambda>0$, the integrand below is nonnegative. For every relatively compact domain $L\Subset M$, we have $\psi\equiv1$ on $L$ for all sufficiently large $\ell$. Hence
	\begin{align*}
		&\int_L e^{-f}\left(
		8|\widetilde{\mathcal R}^{\prime\prime}|^2
		+|\nabla^{\ast}\nabla\omega|^2
		+|\phi|^2+2\lambda|\nabla\omega|^2
		\right)\dd\mu_g\\
		&\qquad\leq
		\int_M e^{-f}\psi^2\left(
		8|\widetilde{\mathcal R}^{\prime\prime}|^2
		+|\nabla^{\ast}\nabla\omega|^2
		+|\phi|^2+2\lambda|\nabla\omega|^2
		\right)\dd\mu_g
		\longrightarrow0.
	\end{align*}
	As $L$ is arbitrary, \eqref{eq:weighted-Sekigawa-integral} follows;
	it also follows directly from Fatou's lemma. This completes the proof of Proposition~\ref{prop:weighted-Sekigawa}.
\end{proof}

Finally, we conclude the proof of Theorem~\ref{thm:main}.

\begin{proof}[\bf Proof of Theorem~\ref{thm:main}]
	Since the Ricci soliton is shrinking, we have $\lambda>0$. Therefore, for $m\geq2$, Proposition~\ref{prop:weighted-Sekigawa} yields $\nabla\omega=0$. Hence $J$ is parallel, and the almost-K\"ahler structure is K\"ahler. This completes the proof of Theorem~\ref{thm:main}.
\end{proof}


\end{document}